\documentclass[12pt]{article}
\usepackage{amsmath,amssymb,amsthm,amsfonts,amscd}
\usepackage{hyperref}
\usepackage[numeric]{amsrefs}
\usepackage[dvips]{graphicx}

\setcounter{MaxMatrixCols}{10}

\newtheorem{thm}[equation]{Theorem}

\newtheorem{lem}[equation]{Lemma}
\newtheorem{rem}[equation]{Remark}

\newtheorem{prop}[equation]{Proposition}

\DeclareMathOperator{\Ad}{Ad}

\numberwithin{equation}{section}

\renewcommand\a{\alpha}

\newcommand\fa{\mathfrak a}
\newcommand\fb{\mathfrak b}

\newcommand\fg{\mathfrak g}

\newcommand\fk{\mathfrak k}

\newcommand\fm{\mathfrak m}
\newcommand\fn{\mathfrak n}

\newcommand\G{\Gamma}

\newcommand{\N}{{\mathbb{N}}}
\newcommand{\Z}{{\mathbb{Z}}}
\newcommand{\R}{{\mathbb{R}}}

\newcommand{\C}{{\mathbb{C}}}

\newcommand{\Q}{{\mathbb{Q}}}

\newcommand{\Hom}{\operatorname{Hom}}

\renewcommand\({\left(}
\renewcommand\){\right)}

\newcommand{\gobble}[1]{}
  \newcommand{\rangeref}[2]{%
    \ref{#1}--\afterassignment\gobble\fam 0\ref{#2}%
  }

\begin{document}

\title{On the rapid decay of cuspidal automorphic forms}
\author{Stephen D. Miller\thanks{Partially supported
by NSF grant  DMS-0901594} \ and Wilfried Schmid\thanks{Partially supported by DARPA grant HR0011-04-1-0031 and NSF
grant DMS-0500922}}
\date{May 15, 2011}
\maketitle

\begin{abstract}

Many important analytic statements about automorphic forms, such as the analytic continuation of certain
$L$-functions, rely on the well-known rapid decay of $K$-finite cusp forms on Siegel sets.  We extend this here to
prove a more general decay statement along sets much larger than Siegel sets, and furthermore state and prove the
decay for smooth but not necessarily $K$-finite cusp forms.  We also state a general theorem about the convergence
of Rankin-Selberg integrals involving unipotent periods, closing a gap in the literature on $L$-functions.
 These properties serve  as the analytic basis \cite{pairings} of a new method to establish holomorphic
continuations of Langlands $L$-functions, in particular the exterior square $L$-functions \cite{extsq} on $GL(n)$.
\vspace{12pt}

\noindent {\bf Keywords:} Automorphic forms, rapid decay, cusp forms, $L$-functions, Rankin-Selberg, integral
representations, uniform moderate growth.
\end{abstract}

\section{Introduction}\label{sec:intro}

Cuspidal automorphic forms on symmetric spaces decay rapidly on Siegel sets, a statement that is generally
attributed to Gelfand and Piatetski-Shapiro. This fact was later generalized to so-called $K$-finite cuspidal
auto\-morphic forms~-- i.e., cuspidal automorphic forms on quotients $\Gamma\backslash G$ that transform finitely
under right translation by a maximal compact subgroup $K$ of the ambient reductive group $G$ \cite{godement:1966}.

The rapid decay is crucially used to prove the analytic continuation and functional equations of $L$-functions, by
the approach known as ``integral representations", which goes back to Hecke in the case of $SL(2,\R)$. In this
approach, the $L$-function $L(s)$ in question is expressed as an integral of an automorphic form against a suitable
integral kernel, depending on the complex variable $s$; the analytic continuation and functional equation for
$L(s)$ are then deduced from the corresponding properties of the integral kernel. In a number of cases, the
convergence of the integrals depends on the rapid decay on sets larger than Siegel sets. The exterior square
$L$-function for $GL(n)$ \cite{js2} is the first instance of this phenomenon; Jacquet-Shalika carefully establish
the rapid decay on the relevant type of set. Other authors, in later papers, refer to the Jacquet-Shalika estimate
and suggest that similar arguments apply to their cases as well, without supplying any details. In some instances,
the Jacquet-Shalika argument does apply, but in others it does not. In any case, we know of no general argument in
the literature that treats all these cases by a uniform method. One purpose of our paper is to provide such a
general, uniform statement. In fact, our results simultaneously handle the convergence of all integral
representations of $L$-functions that we are aware of. Thus our paper closes a notable gap in the $L$-function
literature.

One other issue is the class of cuspidal automorphic forms that decay rapidly. For our own work \cite{pairings} we
need to know the rapid decay of smooth, but not necessarily $K$-finite, automorphic forms. One can ask for two
levels of generality: smooth cuspidal automorphic forms of uniform moderate growth, or even more generally, smooth
cuspidal automorphic forms that are not assumed to have uniform moderate growth; of course they must be required to
have moderate growth, as is almost always assumed in the context of automorphic forms\begin{footnote}{The standard
arguments for proving the rapid decay of cusp forms use uniform moderate growth in an essential way. For $K$-finite
automorphic forms, the distinction between moderate growth and uniform moderate growth disappears, since the former
implies the latter quite directly in the $K$-finite case.}\end{footnote}. The case of uniform moderate growth is
well understood by experts. Yet much to our surprise, we were unable to find a clear, simple proof~-- or even a
clear, simple statement~-- in print. That is the degree of generality which we need for our own work. However, it
should be of interest that the hypothesis of merely moderate growth suffices. Using a result of Averbuch
\cite{Averbuch:1986}, we shall show that for smooth cuspidal automorphic forms, moderate growth implies uniform
moderate growth, and hence rapid decay.

Let us state our results formally. We suppose that $G$ is the group of real points of a reductive algebraic group
defined over $\Q$; this hypothesis will be relaxed slightly in the main body of the paper. To simplify our
statements, we suppose that $G$ has compact center~-- the general situation can easily be reduced to that case. We
consider a smooth, but not necessarily $K$-finite, automorphic form $F$. We require $F$ to have moderate growth, as
is customary, but do not expressly assume that $F$ has uniform moderate growth. We fix a linear norm $\|\ \|$ on
$G$, i.e., the operator norm with respect to a faithful finite dimensional representation of $G$; the particular
choice of the norm does not matter. We choose $A\subset G$, the identity component of a maximal diagonalizable
$\Q$-split subgroup. By a Siegel set $\mathfrak S\subset G$, we mean one defined in terms of $A$ and some maximal
compact subgroup $K\subset G$ as in \cite{Borel:1969b}, for example.\newline

\noindent{\bf Theorem A.\ } If the automorphic form $F$ is cuspidal, then for every Siegel set $\mathfrak S \subset
G$ and every $n\in\N$, there exists a positive constant $C$ such that\vspace{-6pt}
\[
g\in \mathfrak S\ \ \ \ \Longrightarrow\ \ \ \ |F(g)| \ \leq \ C\,\|g\|^{-n} \,.\vspace{6pt}
\]

In the context of $K$-finite automorphic forms, this is the traditional way of stating the rapid decay of cusp
forms. Siegel sets are well adapted to the analysis of $K$-finite cusp forms, but less so to the setting of merely
smooth cuspidal automorphic forms. Our next statement, without reference to Siegel sets, formally implies theorem
A; conversely, it follows from the proof~-- but not the statement~-- of theorem A. We choose a minimal
$\Q$-parabolic subgroup $P\subset G$ which contains the group $A$, with Langlands-Levi decomposition
\begin{equation}
\label{IMLL}
P \ = \ M \! \cdot\! A\! \cdot \! N \,.
\end{equation}
Thus $M\subset G$ is maximal among reductive $\Q$-subgroups commuting with $A$; $M$ is then necessarily anisotropic
over $\Q\,$.\newline

\noindent{\bf Theorem B.\ } If the automorphic form $F$ is cuspidal, then for every compact subset $S_r \subset G$
and every $n\in\N$, there exists a positive constant $C$ such that\vspace{-6pt}
\[
m\in M\,,\,\ a \in A\,,\,\ g\in S_r\ \ \ \ \Longrightarrow\ \ \ \ |F(mag)| \ \leq \ C\,\|a\|^{-n} \,.\vspace{6pt}
\]

The next result involves a $\Q$-parabolic subgroup $P_1\subset G$ which contains $P$, with Langlands-Levi
decomposition
\begin{equation}
\label{ILL}
P_1 \ = \ M_1 \! \cdot\,\! A_1\! \cdot \,\! N_1
\end{equation}
adapted to that of $P$~-- in other words, $A_1\subset A$, $M_1\supset M$, $N_1\subset N$. We should mention that
any $\Q$-parabolic subgroup of $G$ is conjugate, over $\Q\,$, to one of this type.\newline

\noindent{\bf Theorem C.\ } If the automorphic form $F$ is cuspidal, then for every compact subset $S_r \subset G$
and every $n\in\N$, there exists a positive constant $C$ such that\vspace{-6pt}
\[
m \!\in\! M\,,\, n_1 \!\in\! N_1\,,\, a\! \in\! M_1 \!\cap\! A\,,\, g \!\in\! S_r \,
\ \ \Longrightarrow\, \ \ |F(m\, n_1 \,a\,g)| \, \leq \, C \|a\|^{-n} \,.\vspace{6pt}
\]

In section \ref{sec:unipotent} we give a typical example of an application of this result, an example to which
theorems A or B are not directly applicable. Some applications of theorem C can also be deduced from:\newline

\noindent{\bf Theorem D.\ } Let $\delta_1 : P_1 \to \R_{>0}$ denote the modular function for the quotient $G/P_1$.
If the automorphic form $F$ is cuspidal, then for every compact subset $S_r \subset G$ and every $n\in\N$, there
exists a positive constant $C$ such that\vspace{-6pt}
\[
p \in P_1\,,\ g\in S_r \
\ \ \Longrightarrow\ \ \ |F(p\,g)| \, \leq \, C\left(\delta_1(p)\right)^{-n} \,.\vspace{6pt}
\]

In the $K$-finite context, for maximal parabolic subgroups $P_1\subset G$, this bound is stated in \cite{fj}, but
used only in the case of $G = GL(n,\R)$. For the proof, Friedberg-Jacquet refer the reader to \cite{jr}, which
establishes the bound for maximal parabolic subgroups of $GL(n,\R)$, but not for maximal parabolic subgroups of a
general group $G$.

Both theorems C and D contain theorem B as a special instance, in the case of theorem C with $P_1= G$, and in the
case of theorem D with $P_1=P$; see remark \ref{rem:decay3}. For some applications of the theorems, it is necessary
to understand how the bounding constants $C$ in the four theorems depend on the automorphic form $F$. The precise
description of the dependence, which involves representation theoretic notions, will be given in the main body of
this paper.

Theorem A, for $K$-finite automorphic forms, is often stated without the hypothesis of cuspidality, though bounding
not $F$ itself, but rather the difference between $F$ and a finite linear combination of constant terms along
various cusps; see \cite[section~1.2]{MW}, for example. That type of argument applies quite directly in the smooth
case, assuming uniform moderate growth. Theorems B, C, and D are deduced from Theorem A using reduction theory, so
analogues of these for noncuspidal forms for smooth automorphic forms of uniform moderate growth can be deduced in
a straightforward way.

The estimates we prove in this paper are phrased in classical terms. However, they apply directly to adelic
automorphic forms using the translation between adelic and classical Siegel domains described in
\cite{godement:1962}. As Borel \cite{Borel:1969b} has pointed out, the situation is in fact simpler in the adelic
setting because there is only a single adelic cusp. Alternatively one can exhibit the action of the adeles in terms
of the automorphic distributions we study; see \cite{mirabolic}. From that point of view, the automorphic
distribution itself is the adelic object, and the results in sections \ref{sec:rapiddecay} - \ref{sec:maintheorems}
below provide all the decay estimates we need.

We conclude this introduction with a flow chart for our paper. We recall the notion of an automorphic distribution
and its connection to automorphic forms in section \ref{sec:rapiddecay}, which ends with the statement of two
results: theorem \ref{thm:uniformlymg} asserts that in the context of smooth cuspidal automorphic forms, moderate
growth ensures uniform moderate growth; theorem \ref{thm:decay1} states the rapid decay on Siegel sets for
automorphic forms arising from cuspidal automorphic distributions. The former also implies that all smooth cuspidal
automorphic forms do arise from cuspidal automorphic distributions. Theorem A follows from these two theorems,
taken together. In section \ref{sec:maintheorems} we state versions of theorems B\,--\,D in terms of cuspidal
automorphic distributions, and deduce those versions from theorem \ref{thm:decay1}. As in the case of theorem A,
theorems B\,--\,D follow from the corresponding statements in terms of automorphic distributions. Section
\ref{sec:unipotent} describes the application of theorem C to Rankin-Selberg type integrals which involve unipotent
integrations, and contains a typical example. The proofs of theorem \ref{thm:decay1} and theorem
\ref{thm:uniformlymg} occupy, respectively, sections \ref{sec:proofmainthm} and \ref{sec:uniform}. The appendix
explains certain technical details which are well known to representation theorists, but have not been concisely
described elsewhere.

We gratefully acknowledge helpful discussions with Bill Casselman, Solo\-mon Friedberg, Howard Garland, David
Ginzburg, Herv\'{e} Jacquet, Gregory Margulis, Ze'ev Rudnick, David Soudry, Nolan Wallach, and also with Joseph
Hundley, who read our manuscript carefully.

\section{Rapid decay on Siegel sets}\label{sec:rapiddecay}

We start with a reductive linear group defined over $\Q$, realized as subgroup of $GL(N,\C)$ for some $N$,
compatibly with the $\Q$-structure determined by $\Q^N \subset \C^N$. We let $G$ denote a finite cover, typically
but not always the trivial cover, of the group of real points in that reductive linear group. Following common
practice, we call a subgroup $\G \subset G$ {\it arithmetic} if it is commensurate with a stabilizer of a lattice
in $\,\R^N$\!,\,\ a lattice whose $\Q$-linear span is $\Q^N$. With slight abuse of terminology, we call $g \in G$
{\it rational} if it preserves $\Q^N \subset \C^N$\!, and $G_\Q$ denotes the group of all such rational $g$. We
shall say that a subgroup of $G$ is ``defined over $\Q$" if it is the inverse image, in $G$, of the group of real
points of a $\Q$-subgroup of the original linear group. Analogously we shall call a homomorphism $\phi : G \to H$,
from $G$ to the group $H$ of real points in some other $\Q$-group, ``defined over $\Q$" if it drops to a
homomorphism from the linear quotient of $G$ to $H$, and is defined over $\Q$ on that level.

Going to a further finite cover, if necessary, one can express $G$ as the direct product of a maximal central,
connected, $\Q$-split subgroup and a reductive group whose center is compact and which satisfies the original
hypotheses; see, for example, \cite[Proposition 10.7]{Borel:1969b}. This allows us to impose the following standing
assumption, without loss of generality:
\begin{equation}
\label{cptctr}
Z_G\,,\, \text{the center of $G$,\, is compact.}
\end{equation}
Indeed, any automorphic form on which the center of $G$ acts according to a character is completely determined by
its restriction to the derived group $[G,G]$. Assertions about the rapid decay do not really involve the center.
Thus (\ref{cptctr}) is not a restrictive assumption, but simplifies various definitions and statements below.

We pick a minimal one among the parabolic subgroups, defined over $\Q$, of the original reductive linear group, and
let $P$ denote the corres\-ponding subgroup of $G$. It is unique up to conjugation by some $g \in G_\Q\,$, and
\begin{equation}
\begin{aligned}
\label{pmin}
&P \ = \ M\!\cdot\! A \!\cdot\! N\,,\ \ \text{with}
\\
&\ \ \ A\ \ \text{abelian, connected, $\Q$-split, central in $M\!\cdot \! A$\,,}
\\
&\ \ \ \ \ \ N\ \ \text{unipotent, connected, defined over $\Q$, normalized by $M\!\cdot \!A$\,,}
\\
&\ \ \ \ \ \ \ \ \ \text {and}\ \ M\ \ \text{reductive, defined over $\Q$, anisotropic over $\Q$\,.}
\end{aligned}
\end{equation}
Here ``connected" means connected in the Hausdorff topology. Since we allow finite covers of algebraic groups, we
deviate slightly from Borel's conventions in \cite{Borel:1969b}, which would let $A$ denote the Zariski closure of
what we denote by $A$. In the case of $G = SL(2,\R)$, for example, equipped with the standard $\,\Q$-structure, the
upper triangular subgroup can play the role of $P$; for Borel, $M$ reduces to the identity and $A$ is the entire
diagonal subgroup, whereas for us $M=\{\pm 1_{2\times 2}\}$ and $A$ is the subgroup of diagonal matrices with
positive diagonal entries.

Once and for all we choose a maximal compact subgroup $K \subset G$, whose Lie algebra is perpendicular to the Lie
algebra of $A$, perpendicular relative to the trace form of the tautological representation of $GL(N,\R)$ on
$\C^N$~-- recall that $G$ finitely covers a subgroup of $GL(N,\R)$, so the Lie algebra of $GL(N,\R)$ contains that
of $G$. The assumption (\ref{cptctr}) implies $Z_G \subset K$, of course.

With our conventions, passing to a finite cover of $G$ only affects $M$; both $A$ and $N$ remain connected, $N$ is
still an algebraic group, and $M$ a finite cover of an algebraic group. The Lie algebras of these groups will be
referred to by the corresponding lower case German letters. A restricted root is a non-zero element $\alpha \in
\fa^*$ such that the $\alpha$-{\it root space}
\begin{equation}
\label{rootspace}
\fg^\alpha \ = \ \{\, X\in \fg \ \mid \ [H,X] = \langle \alpha , H \rangle X\ \text{for all $H \in \fa$\,} \}
\end{equation}
is non-zero. Then
\begin{equation}
\label{setofroots}
\Phi(\fa,\fg) \ = \ \{\, \alpha\in \fa^* \ \mid \ \fg^\alpha \neq 0 \,,\,\ \alpha \neq 0 \,\}\,,
\end{equation}
the set of rational roots, is an abstract root system, not necessarily reduced \cite{Borel:1969b}. It contains
\begin{equation}
\label{positiveroots}
\Phi^+(\fa,\fg) \ = \ \{\, \alpha\in \Phi(\fa,\fg) \ \mid \ \fg^\alpha \subset \fn \ \}
\end{equation}
as a positive root system -- in particular, $\Phi(\fa,\fg) = \Phi^+(\fa,\fg)\cup (-\Phi^+(\fa,\fg))$, as disjoint
union. Every $\alpha \in \Phi(\fa,\fg)$ lifts to a character
\begin{equation}
\label{eofroot}
e^\alpha \ : \ A \longrightarrow \R_{>0} \,,\ \ \ \ \ e^\alpha (\exp H) = e^{\langle \alpha , H \rangle}\ \ \text{for $H \in \fa$\,},
\end{equation}
and in fact to an algebraic character of the Zariski closure of $A$. Equivalently, one may view $e^\alpha$ as an
algebraic character of $M\! \cdot \! A$, which is identically equal to 1 on the identity component of $M$ (identity
component in the Zariski sense), but may take the value $\,-1\,$ on some of the other components of $M$.

Statements about the decay of cusp forms on symmetric spaces are traditionally stated in terms of Siegel sets,
which are inverse images, under the projection $G\to G/K$, of approximate fundamental domains of the action of
$\Gamma$ on $G/K$. In dealing with smooth, but not necessarily $K$-finite functions on $G$, growth estimates are
relatively insensitive to right translation by arbitrary compact subsets of $G$, not just $K$ itself. For that
reason it is convenient to introduce the notion of a {\it generalized Siegel set\/}:~a set $\,\mathfrak S =
\mathfrak S(S_\ell, \epsilon,S_r)\,$ of the form
\begin{equation}
\begin{aligned}
\label{siegelset}
&\mathfrak S \ = \ \{\ g_\ell\cdot a \cdot g_r\ \mid \ g_\ell \in S_\ell\,,\ a \in A_\epsilon^+\,,\ g_r \in S_r\ \}\,,\, \ \text{with}
\\
&\ \ \ A_\epsilon^+\ =\ \{ \ a \in A\ \mid \ e^{\alpha}(a) > \epsilon \,\ \text{for all $\alpha \in \Phi^+(\fa,\fg)$}\,\}\,,\, \ \text{with}
\\
&\ \ \ \ \ \ \text{compact subsets}\,\ S_\ell \subset M \!\cdot \! N \,\ \text{and}\,\ S_r \subset G ,\ \text{and with}\,\ \epsilon > 0\,.
\end{aligned}
\end{equation}
This reduces to the usual notion of Siegel set in the  special case when $S_r = K$.

We do not mean to suggest that generalized Siegel sets $\mathfrak S$ have what Borel \cite[\S9.6]{Borel:1969b}
calls the ``Siegel property", i.e., the finiteness of the set of $\gamma\in \G$ such that $\gamma \!\cdot
\!\mathfrak S \cap \mathfrak S \neq \emptyset$ -- typically they do not. Siegel sets in the usual sense do have
this property, of course. For us, in the setting of smooth, but not necessarily $K$-finite, automorphic forms, the
notion of generalized Siegel set has significant technical advantages. In any case, our results below, on the rapid
decay of cuspidal automorphic forms on generalized Siegel sets, evidently imply rapid decay on Siegel sets in the
usual sense.

Siegel sets $\mathfrak S(S_\ell, \epsilon,K)$ in the usual sense have finite Haar measure, and there exists a
finite subset $C\subset G_{\Q}$ such that
\begin{equation}
\label{siegelset2}
\begin{aligned}
{\textstyle\bigcup}_{c\in C}\ \big(\Gamma\,c\,\mathfrak S(S_\ell,\epsilon,K)\big)\ = \ G\,,\,\ \ \text{provided $\epsilon$ and $S_\ell$}
\\
\text{are appropriately chosen}.
\end{aligned}
\end{equation}
Specifically, as $\,C\subset G_\Q\,$ one can take a complete set of representatives for the set of cusps, i.e., the
double coset space $\Gamma \backslash G_\Q / P_\Q$ \cite{Borel:1969b}. Thus all bounded functions $f \in
C(\Gamma\backslash G)$ are integrable over $\Gamma\backslash G$, in particular all functions $f \in
C(\Gamma\backslash G)$ that decay rapidly on any generalized Siegel set.

Because of the assumptions on $G$, there exists a finite dimensional representation $\tau : G \to GL(N,\R)$ whose
kernel is finite. For $g\in G$ we define
\begin{equation}
\label{norm-1}
\|g\| \ = \ \text{operator norm of $\tau(g)$\,}.
\end{equation}
The subgroup $A$ acts, via $\tau$, in a diagonalizable fashion, with positive real diagonal matrix entries. Thus,
for every $a\in A$, the norm  $\|a\|$ and the largest eigenvalue of $\tau(a)$ are mutually bounded. Recall that a
function $F\in C^\infty(G)$ is said to have {\em moderate growth\/} if
\begin{equation}
\label{moderategrowth}
|F(g)| \ \leq \ C\,\|g\|^N\ \ \ \text{for some $C>0$ and $N\in \N$}\,.
\end{equation}
It has {\em uniform moderate growth\/} if every $r(X)F$, with $X\in U(\fg_\C)$, has mode\-rate growth, with the
same index of growth $N$ as $F$ itself. Here $r$ denotes the action, by infinitesimal right translation, of the
universal enveloping algebra $U(\fg_\C)$ of the complexification $\fg_\C=\C\otimes_\R \C$ of $\fg$.

Let $(\pi,V)$ be an admissible representation of $G$, of finite length, on a reflexive Banach
space\begin{footnote}{We shall recall the definitions of admissibility and finite length in section
\ref{sec:proofmainthm}, and the relevance of reflexivity in the appendix.}\end{footnote}. The action $\pi$ of $G$
on the Banach space $V$ restricts to a representation on the space of $C^\infty$ vectors
\begin{equation}
\label{Vinfinity}
V^\infty\ = \ \{\,v\in V\, \mid \, g\mapsto \pi(g)v\,\ \text{is a $C^\infty$ function from $G$ to $V$}\,\}\,.
\end{equation}
It is dense in $V$ and carries a natural topology, the topology which $V^\infty$ inherits from its inclusion in the
space of $C^\infty$ $V$-valued functions on $G$ via (\ref{Vinfinity}). The action of $G$ on $V^\infty$ is
continuous with respect to this $C^\infty$ topology. We let $(\pi',V')$ denote the Banach dual of $(\pi,V)$, which
is again an admissible representation of finite length. Thus it makes sense to define the space of distribution
vectors for $V$ as
\begin{equation}
\label{V-infinity}
V^{-\infty}\ = \ \left((V')^\infty\right)',
\end{equation}
the continuous dual of the space of $C^\infty$ vectors $(V')^\infty$ for $(\pi',V')$. As follows formally from the
definition,
\begin{equation}
\label{Vinffinity-infinity}
V^\infty \ \subset \ V \ \subset \ V^{-\infty}\,,
\end{equation}
in analogy to the containments $C^{\infty}(X) \subset L^2(X) \subset C^{-\infty}(X)$ for a compact manifold $X$. By
duality, $G$ acts on $V^{-\infty}$. This action becomes continuous when one equips $V^{-\infty}$ with the strong
dual topology, which turns $V^{-\infty}$ into a complete, locally convex, Hausdorff topological vector
space\begin{footnote}{for a discussion of the topology on $(V')^{-\infty}$ and of the strong continuity of the
action of $G$ on $(V')^{-\infty}$ see the appendix.}\end{footnote}.

As a general fact \cites{Casselman:1989, wallach}, for any choice of $v\in V^\infty$ and $\tau\in (V')^{-\infty}$,
the complex valued function
\begin{equation}
\label{autodist-1}
g\ \mapsto \ F_{\tau,v}(g)\ =_{\text{def}} \ \langle \, \tau \,,\, \pi(g)v\,\rangle \ = \ \langle \, \pi'(g^{-1})\tau \,,\,v\,\rangle\
\end{equation}
is smooth, of uniform moderate growth~-- see (\ref{infty5a}) below~-- and transforms finitely under $Z(\fg_\C)$,
the center of the universal enveloping algebra $U(\fg_\C)$. We now fix an arithmetic subgroup $\G \subset G$. In
the preceding definition, if $\tau \in \left((V')^{-\infty}\right)^\G$, i.e., if $\tau\in (V')^{-\infty}$ is
$\G$-invariant, the function (\ref{autodist-1}) is left $\G$-invariant. Thus
\begin{equation}
\label{autodist0}
\tau \!\in\! \left((V')^{-\infty}\right)^\G\!,\,\ v\!\in\! V^\infty \ \ \Longrightarrow \ \ F_{\tau,v}\,\ \text{is a smooth $\G$-automorphic form}.
\end{equation}
This observation justifies calling $\tau \in \left((V')^{-\infty}\right)^\G$ a {\em $\G$-automorphic distribution
vector} for $V'$.

The most interesting examples of $\G$-invariant distribution vectors arise from cuspidal automorphic
representations, through a construction we now describe. The group $G$ acts unitarily on the Hilbert space
$L^2(\G\backslash G)$ by right translation. An {\it automorphic representation\/} consists of an irreducible
unitary representation $(\pi,V)$ of $G$, together with a $G$-invariant isometric embedding
\begin{equation}
\label{autorep}
j\, :\, V \ \hookrightarrow \ L^2(\G\backslash G)\,.
\end{equation}
If $v\in V^\infty$ is a $C^\infty$ vector, then $j(v)$ is a $\G$-invariant $C^\infty$ function on $G$. Thus one can
evaluate $j(v)$ at $e\in G$. The linear map
\begin{equation}
\label{autodist1} \tau_j\,:\, V^\infty\, \longrightarrow\, \C\,,\ \
\ \tau_j(v)\,=\,j(v)(e)
\end{equation}
is continuous with respect to the $C^\infty$ topology on the space of $V^\infty$, and it is also $\G$-invariant.
Thus, according to our terminology, $\tau_j$ is a $\G$-invariant distribution vector for the dual representation
$(\pi',V')$. We refer to $\tau_j$ as the automorphic distribution associated to the automorphic representation
(\ref{autorep}). It completely determines $j$, since $V^\infty$ is dense in $V$ and
$j(v)(g)=\pi(g)(j(v))(e)=\tau_j(\pi(g)v)$ for all $v\in V^\infty$, $\,g\in G$.  In the notation of
(\ref{autodist-1}), we have   $j(v)=F_{\tau_j,v}$, and hence

\begin{rem}
\label{ftaujv} \rm If $(\pi,V,j)$ is an automorphic representation as in (\ref{autorep}), the space $\{\,
F_{\tau_j,v} \,\mid \, v \in V^\infty \,\}\,$ coincides with the space of all $C^\infty$ vectors in the closed,
$G$-invariant, $G$-irreducible subspace $\,j(V) \subset L^2(\G \backslash G)$.
\end{rem}

The arithmetic subgroup $\G$ intersects any unipotent $\Q$-subgroup $U$ in an arithmetic subgroup $\G\cap U$, and
any arithmetic subgroup of $U$ is cocompact \cite{Borel:1969b}. Recall that a $\G$-automorphic form $F$ is said to
be {\it cuspidal\/} if
\begin{equation}
\label{cuspidal1}
\int_{(\G\cap U)\backslash U}\, F(u g)\,du\ = \ 0\ \ \ \text{for all $g\in G$}\,,
\end{equation}
and for any unipotent subgroup $U\subset G$ which arises as the unipotent radi\-cal of a proper $\Q$-parabolic
subgroup of $G$. The integral is well defined because $F$ is $\G$-invariant, and converges because $(\G\cap
U)\backslash U$ is compact. Analogously we call an automorphic distribution $\tau\in
\left((V')^{-\infty}\right)^\G$ cuspidal if
\begin{equation}
\label{cuspidal2} \int_{U/(\G\cap U)}\pi'(u)\tau\,du\ = \ 0\,,
\end{equation}
again for all subgroups $U\subset G$ which arise as the unipotent radical of a proper $\Q$-parabolic subgroup of
$G$. This latter integral makes sense because $V^{-\infty}$ is a complete, locally convex, Hausdorff topological
vector space, on which $G$ acts strongly continuously.

\begin{lem}\label{lem:cusp}
The following three conditions on the automorphic distribution $\tau_j$ corresponding to an automorphic
representation $j: V \hookrightarrow L^2(\G\backslash G)$ are equivalent:\vspace{-4pt}
\begin{itemize}
\item[\rm{a)}]\ $\tau_j$ is cuspidal;\vspace{-4pt}
\item[\rm{b)}]\  for all $\,v \in V^\infty$, the automorphic forms $\,j(v) = F_{\tau_j,v}$ are
    cuspidal;\vspace{-4pt}
\item[\rm{c)}]\ for some nonzero $\,v \in V^\infty$, $\,j(v) = F_{\tau_j,v}$ is cuspidal.
\end{itemize}
\end{lem}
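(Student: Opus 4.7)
\medskip

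\noindent\emph{Proof plan.}  I will prove the cycle \textrm{(a)}$\Rightarrow$\textrm{(b)}$\Rightarrow$\textrm{(c)}$\Rightarrow$\textrm{(a)}. The first two implications are essentially formal, and the real content lies in the last one, where one has to recover a statement about the distribution vector $\tau_j$ itself from information about a single matrix coefficient.

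For \textrm{(a)}$\Rightarrow$\textrm{(b)}, fix $v\in V^\infty$ and $g\in G$, and start from the identity
\[
F_{\tau_j,v}(ug)\ =\ \langle\tau_j,\pi(ug)v\rangle\ =\ \langle\pi'(u^{-1})\tau_j,\pi(g)v\rangle
\]
for $u\in U$.  Integrating over $(\G\cap U)\backslash U$, I will interchange the integral with the pairing against $\pi(g)v$, which is legitimate because $V^{-\infty}$ is a complete locally convex Hausdorff space on which $G$ acts strongly continuously (so $u\mapsto \pi'(u^{-1})\tau_j$ is continuous with $V^{-\infty}$-values on the compact set $(\G\cap U)\backslash U$) and because the pairing $V^{-\infty}\times V^\infty\to\C$ is jointly continuous. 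A change of variable $u\mapsto u^{-1}$ (permitted by bi-invariance of Haar measure on the unimodular group $U$, and swapping the left and right quotients by $\G\cap U$) converts the integral into exactly \eqref{cuspidal2} applied to $\pi(g)v$, so cuspidality of $\tau_j$ forces $\int_{(\G\cap U)\backslash U} F_{\tau_j,v}(ug)\,du = 0$.

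The implication \textrm{(b)}$\Rightarrow$\textrm{(c)} is immediate, since $V^\infty$ is dense in $V\neq 0$.

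The serious step is \textrm{(c)}$\Rightarrow$\textrm{(a)}.  Fix a nonzero $v_0\in V^\infty$ with $F_{\tau_j,v_0}$ cuspidal, let $U\subset G$ be the unipotent radical of a proper $\Q$-parabolic subgroup, and set
\[
\sigma\ =\ \int_{U/(\G\cap U)}\pi'(u)\tau_j\,du\ \in\ (V')^{-\infty}.
\]
Reversing the argument for \textrm{(a)}$\Rightarrow$\textrm{(b)} (which only needed the integrability and exchange of integral and pairing, not cuspidality of $\tau_j$), the vanishing of $\int_{(\G\cap U)\backslash U}F_{\tau_j,v_0}(ug)\,du$ at every $g\in G$ translates into $\langle\sigma,\pi(g)v_0\rangle=0$ for all $g\in G$.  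Convolving against test functions yields $\langle\sigma,\pi(\phi)v_0\rangle=0$ for every $\phi\in C_c^\infty(G)$, so $\sigma$ annihilates the subspace
\[
W_0\ =\ \mathrm{span}\{\,\pi(\phi)v_0\ :\ \phi\in C_c^\infty(G)\,\}\ \subset\ V^\infty.
\]
To conclude that $\sigma=0$ it suffices to show that $W_0$ is dense in $V^\infty$ with respect to the $C^\infty$-topology, since $\sigma\in((V')^\infty)'$ acts continuously on $V^\infty$.  The closure of $W_0$ in the Banach topology of $V$ is a nonzero closed $G$-invariant subspace and hence equals $V$ by irreducibility of $(\pi,V)$.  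For any $\phi\in C_c^\infty(G)$ the convolution operator $\pi(\phi):V\to V^\infty$ is continuous into the $C^\infty$-topology, so approximating an arbitrary $w\in V$ by elements of $W_0$ in the Banach norm and applying $\pi(\phi)$ shows $\pi(\phi)V\subset\overline{W_0}^{V^\infty}$.  Dixmier--Malliavin gives $V^\infty=\pi(C_c^\infty(G))V$ as a linear subspace, so $V^\infty\subset\overline{W_0}^{V^\infty}$, completing the density argument.

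The main obstacle is this last density claim: irreducibility only supplies density of $W_0$ in the (weaker) Banach topology, and one has to upgrade it to the $C^\infty$-topology on $V^\infty$.  The clean way around this is the combination of Dixmier--Malliavin with the continuity of $\pi(\phi):V\to V^\infty$ sketched above.  Every other step is either functional-analytic bookkeeping with the topology on $V^{-\infty}$ or a Fubini/change-of-variable computation on the unipotent group.
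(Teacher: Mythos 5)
Your proposal is correct and follows the same overall strategy as the paper: the cycle a) $\Rightarrow$ b) $\Rightarrow$ c) $\Rightarrow$ a), with the real content in c) $\Rightarrow$ a), where one shows that $\sigma = \int_{U/(\G\cap U)}\pi'(u)\tau_j\,du$ annihilates a subspace of $V^\infty$ that is dense in the $C^\infty$-topology. The place where you genuinely diverge is in justifying that density. The paper simply asserts that the $G$-translates of $v_0$ ``span a dense subspace of the irreducible $G$-module $V^\infty$,'' implicitly invoking topological irreducibility of $V^\infty$~--- a consequence of the bijection between closed $G$-invariant subspaces and $(\fg_\C,K)$-submodules recalled in (\ref{admiss4}), which rests on admissibility and finite length. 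You instead work with $W_0 = \text{span}\{\pi(\phi)v_0 : \phi\in C_c^\infty(G)\}$, use irreducibility only for Banach-density in $V$, and then upgrade to $C^\infty$-density via the continuity of $\pi(\phi):V\to V^\infty$ plus Dixmier--Malliavin. This trades the admissibility/finite-length background for the Dixmier--Malliavin factorization theorem; your route is slightly longer but isolates exactly where the topology upgrade happens, whereas the paper folds it into a one-line appeal to the irreducibility of $V^\infty$. Both are valid, and you have correctly identified the one nontrivial analytic point. One small remark: the observation that $W_0$ is itself $G$-invariant (since $\pi(g)\pi(\phi)=\pi(L_g\phi)$) simplifies the Banach-density step slightly, but your phrasing already captures this.
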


\begin{proof}
Evidently b) implies c), and in view of (\ref{autodist-1}), a) implies b). If $F_{\tau_j,v_0}$ is cuspidal for some
non-zero $v_0\in V^\infty$, the continuous linear function from $V^\infty$ to $\C$, defined by
\begin{equation}
\label{cuspidal3} v\ \ \mapsto \ \ \langle \ \int_{U/(\G\cap U)}\pi'(u)\tau_j\,du\,\ ,\ v \,\ \rangle \ ,
\end{equation}
vanishes on all the $G$-translates of $v_0$. But those span a dense subspace of the irreducible $G$-module
$V^\infty$. The linear function (\ref{cuspidal3}) therefore vanishes identically, and (\ref{cuspidal2}) must hold.
In other words, c) implies a).
\end{proof}

As in the introduction, automorphic forms will be assumed to be smooth, but not necessarily $K$-finite. Also, again
as in the introduction, we do not expressly require automorphic forms to have uniform moderate growth as one
usually does, only moderate growth.

\begin{thm}
\label{thm:uniformlymg} Any cuspidal automorphic form $F$ has uniform moderate growth. Moreover, any such $F$ can
be expressed as a finite linear combination of cuspidal automorphic forms of type $F_{\tau_j,v}$\,, corresponding
to cuspidal automorphic distributions $\tau_j \in \left((V')^{-\infty}\right)^\G$ as in {\rm (\ref{autodist1})},
and $\,C^\infty$ vectors $v\in V^\infty$.
\end{thm}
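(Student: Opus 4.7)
The theorem comprises two assertions---that cuspidality promotes moderate growth to uniform moderate growth, and that any such $F$ decomposes as a finite sum $F = \sum_{i=1}^N F_{\tau_{j_i}, v_i}$---and the plan is to address them in this order, since the second builds on the first.

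For the first assertion, I would appeal to the theorem of Averbuch~\cite{Averbuch:1986}. The observation that makes it applicable uniformly to $F$ and all its derivatives is that the cuspidal integral condition (\ref{cuspidal1}) is preserved by right translation: for $X \in U(\fg_\C)$, the operator $r(X)$ commutes both with left translations by $\Gamma$ and with the unipotent integration against Haar measure on the compact quotient $(\Gamma\cap U)\backslash U$. Hence every $r(X)F$ is again a smooth cuspidal function of moderate growth, and Averbuch's criterion furnishes a moderate-growth bound. Tracking the dependence of this bound on $X$ (in fact it depends only on data intrinsic to $F$), one obtains a common growth index for all $r(X)F$, which is precisely uniform moderate growth.

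For the second assertion, uniform moderate growth combined with cuspidality forces $F$ into $L^2_{\mathrm{cusp}}(\Gamma\backslash G)$: a smooth adaptation of the Gelfand--Piatetski-Shapiro argument (now available, since uniform moderate growth is in hand) yields rapid decay of $F$ on Siegel sets, and the covering (\ref{siegelset2}) gives integrability. The cuspidal $L^2$-space decomposes discretely as a Hilbert direct sum $\bigoplus_\pi m(\pi)\, V_\pi$ with finite multiplicities $m(\pi)$, so $F = \sum_\pi F_\pi$ accordingly. For each $\pi$, smoothness of $F_\pi$ together with the finite-dimensionality of the multiplicity space lets one write $F_\pi = \sum_{k=1}^{m(\pi)} F_{\tau_{j_{\pi,k}}, v_{\pi,k}}$, where $j_{\pi,k}: V_\pi \hookrightarrow L^2_{\mathrm{cusp}}$ ranges over a basis of $G$-equivariant embeddings and $v_{\pi,k} \in V_\pi^\infty$ is chosen so that $j_{\pi,k}(v_{\pi,k})$ is the corresponding component of $F_\pi$.

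The main obstacle is to show that only finitely many $\pi$ contribute. This is where the $Z(\fg_\C)$-finiteness of $F$---the annihilation of $F$ by an ideal of finite codimension in $Z(\fg_\C)$, which I regard as implicit in the notion of an automorphic form---becomes decisive: $Z(\fg_\C)$-finiteness forces the infinitesimal characters of the $V_\pi$ appearing in the spectral expansion of $F$ to lie in a finite set, and by a theorem of Harish-Chandra each infinitesimal character supports only finitely many isomorphism classes of irreducible unitary representations of $G$. Combined with the finite multiplicities $m(\pi)$, this collapses the expansion to a finite sum. If $Z(\fg_\C)$-finiteness is not built into the definition, this step would require a more delicate truncation argument leveraging the uniform moderate growth obtained in the first part of the proof.
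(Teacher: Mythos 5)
Your plan correctly identifies Averbuch's theorem as the tool for the first assertion, but the step that makes Averbuch applicable is missing, and this is exactly where the paper's real work lies. Averbuch's result is a statement about Hecke-finite automorphic forms: it gives a growth exponent depending only on the eigenvalues (and multiplicities) of the Hecke action on the finite-dimensional space of Hecke translates of $F$. Cuspidality alone is not a hypothesis of that theorem. The paper therefore interposes Lemma~\ref{lem:uniform}, which asserts that a smooth cuspidal automorphic form of merely moderate growth is automatically Hecke finite, and proving this lemma occupies the bulk of Section~\ref{sec:uniform}: one introduces a weighted measure $dm$ bounded by $\|g\|^{-2N}\,dg$ near the cusps so that $F$ lies in a cuspidal Hilbert space $\widetilde W$, forms the $G$-cyclic closure $W$ of $F$, uses $Z(\fg_\C)$-finiteness plus ellipticity of the Casimir to show the $K$-isotypic pieces $W_i$ are finite-dimensional (via the discreteness and finite multiplicity of the cuspidal spectrum), concludes $W$ is admissible with finitely many irreducible constituents, and only then deduces Hecke finiteness. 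None of this appears in your proposal, and without it you cannot invoke Averbuch.

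There is also a circularity in your first paragraph: you assert ``every $r(X)F$ is again a smooth cuspidal function of moderate growth'' and then apply Averbuch's criterion to it. But the moderate growth of $r(X)F$ is precisely what is at issue --- moderate growth of $F$ does not automatically propagate to $r(X)F$, and establishing that it does (with a common exponent) is the content of the theorem. The correct logic, once Hecke finiteness is in hand, is that $r(X)$ commutes with the Hecke action (which acts on the left), so $r(X)F$ is Hecke finite \emph{with the same eigenvalue data}, and Averbuch then gives the common exponent. You should not first assume $r(X)F$ is of moderate growth.

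Your treatment of the second assertion is closer to the mark. The paper re-runs the $W$-construction with Haar measure $dg$ in place of $dm$ and projects $F$ onto the finitely many irreducible summands of $W$; you instead decompose $F$ over the full cuspidal spectrum and then use $Z(\fg_\C)$-finiteness together with Harish-Chandra's theorem that only finitely many irreducible admissible representations share a given infinitesimal character to cut the sum down to a finite one. Both routes use the discreteness and finite multiplicity of the cuspidal spectrum and the finite-codimension ideal $I_F\subset Z(\fg_\C)$; your variant is slightly more direct but requires the same preparatory inputs, and in particular it still relies on the rapid decay of $F$ and all $r(X)F$ (hence on uniform moderate growth, hence on the unproved Hecke finiteness) to place $F$ and its derivatives in $L^2(\Gamma\backslash G)$ and to see that each spectral projection $F_\pi$ is a $C^\infty$ vector.
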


Casselman has suggested that moderate growth of automorphic forms might imply uniform moderate growth even without
the hypothesis of cuspidality. We have no insight into this question. We shall prove the theorem in section
\ref{sec:uniform}. At this point, to prove theorem A, it suffices to establish the rapid decay on Siegel sets for
cuspidal automorphic forms of type $F_{\tau_j,v}$, as in the statement of the theorem. Let $\tau_j \in
\left((V')^{-\infty}\right)^\G$ be an automorphic distribution attached to an automorphic representation   The
topology on the space of $C^\infty$ vectors $V^\infty$ is Fr\'echet, and thus can be described by a family of
linear seminorms\begin{footnote}{The particular choice does not matter, but a concrete choice is described in
section \ref{sec:proofmainthm}.}\end{footnote} $\,\nu_k:V^\infty\to\R_{\geq 0}$\,, $k\in\N$\,. We may and shall
assume that the sequence is non-decreasing, in the sense that $\nu_{k+1}(v)\geq \nu_k(v)$ for all $v\in V^\infty$
and $k\in \N$.

\begin{thm}
\label{thm:decay1} Let $\mathfrak S = \mathfrak S(S_\ell,\epsilon,S_r)$ be a generalized Siegel set. If $\tau_j$ is
cuspidal, then for each $n\in\N$, there exists $k=k(\tau_j,n)\in\N$ and a positive constant $C=C(\tau_j,\mathfrak
S,k,n)$, such that for all $v\in V^\infty$,\vspace{-4pt}
\[
g \in \mathfrak S \ \ \ \ \Longrightarrow \ \ \ \ |F_{\tau_j,v}(g)| \ \leq \ C\,\nu_k(v)\,\|g\|^{-n} \,.
\]
\end{thm}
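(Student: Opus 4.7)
First I would exploit the product decomposition $\mathfrak{S}=S_\ell\cdot A_\epsilon^+\cdot S_r$ to reduce to bounding $|F_{\tau_j,v}(a)|$ for $a\in A_\epsilon^+$, at the cost of replacing $v$ by certain translates by bounded group elements. Specifically, since $F_{\tau_j,v}(g_\ell a g_r)=F_{\tau_j,\pi(g_r)v}(g_\ell a)$ and right translation by the compact set $S_r$ acts continuously on $V^\infty$, the factor $g_r$ is absorbed into a controlled modification of $v$. Writing $g_\ell=mn$ with $(m,n)$ in a compact subset of $M\times N$ and using $g_\ell a=m\,a\,(a^{-1}na)$---valid because $A$ is central in $M\!\cdot\!A$, and because $a^{-1}na$ remains bounded for $a\in A_\epsilon^+$ (conjugation by $a^{-1}$ contracts positive root spaces)---the left factor $g_\ell$ also absorbs into the vector. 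Since $\|g\|\asymp\|a\|$ on $\mathfrak{S}$, the task becomes proving $|F_{\tau_j,v}(a)|\le C\,\nu_k(v)\,\|a\|^{-n}$ for $a\in A_\epsilon^+$.

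The second step is to smooth $\tau_j$ along $N$ via a Dixmier--Malliavin type factorization: write $v=\sum_i\pi(\phi_i)w_i$ with $\phi_i\in C_c^\infty(N)$, $w_i\in V^\infty$, with the factors depending continuously on $v$ in suitable seminorms. Pushing $\pi(a)$ past $\pi(\phi_i)$ through the substitution $u'=aua^{-1}$ (whose Jacobian is $e^{2\rho(\log a)}$) expresses $F_{\tau_j,v}(a)$ as a sum of integrals
\[
\int_N \phi_i^a(u')\,F_{\tau_j,\pi(a)w_i}(u')\,du',\qquad \phi_i^a(u')=e^{-2\rho(\log a)}\phi_i(a^{-1}u'a),
\]
so that $\phi_i^a$ is supported in the large set $a\cdot\mathrm{supp}(\phi_i)\cdot a^{-1}\subset N$.

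Now cuspidality enters. By Lemma~\ref{lem:cusp} and \eqref{cuspidal2}, the function $u'\mapsto F_{\tau_j,\pi(a)w_i}(u')$ is left $(\G\cap N)$-invariant with zero mean over the compact quotient $(\G\cap N)\backslash N$. I would periodize $\phi_i^a$ by $\G\cap N$, subtract its average (which contributes nothing by the mean-zero property), and represent the resulting mean-zero periodic function as a sum of $\fn$-derivatives of smooth periodic functions. Moving these derivatives onto $F_{\tau_j,\pi(a)w_i}$ via integration by parts and invoking the identity $\pi(X)\pi(a)=e^{-\alpha(\log a)}\pi(a)\pi(X)$ for $X$ in the $\alpha$-root space produces a factor $e^{-\alpha(\log a)}$ per derivative. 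Since $a\in A_\epsilon^+$ forces $\alpha(\log a)>\log\epsilon$ for every positive $\alpha$, and $\log\|a\|$ is controlled by a linear combination of positive-root values $\alpha(\log a)$, iterating the procedure enough times yields the required factor $\|a\|^{-n}$.

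The principal obstacle is the non-abelian structure of $N$: cuspidality only forces the vanishing of the mean, not of the higher Fourier coefficients in a Kirillov parametrization. To handle this I would run the mean-zero Fourier argument at the level of the abelianization $N/[N,N]$---which handles the abelian component---and then iterate up the lower central series of $N$, invoking cuspidality along intermediate $\Q$-parabolics $P_1\supset P$ to absorb the commutator contributions. A second, more technical, difficulty is the uniform dependence on $v$: the Dixmier--Malliavin factorization and each integration by parts introduce finitely many derivatives, and encoding the cumulative loss in a single seminorm index $k=k(\tau_j,n)$ depending only on $\tau_j$ and $n$ requires careful bookkeeping.
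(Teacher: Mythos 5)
Your core mechanism is the right one: cuspidality gives vanishing of the constant term, and after converting left derivatives along $\fn$ into right derivatives by conjugation, $\Ad a^{-1}$ contracts the positive root spaces by $e^{-\alpha}(a)$, which supplies the gain to be iterated. The paper, however, arrives at this by a considerably shorter route. Since $v\in V^\infty$, the function $F_{\tau_j,v}$ is already smooth, and by Wallach's lemma one has $|F_{\tau,v}(g)|\leq b\,\|g\|^c\,\nu_k(v)$ with the same exponent $c$ for every right derivative $F_{\tau,\pi(X)v}$ (the crucial uniformity). Given a zero-mean smooth function on the compact quotient $(\Gamma\cap N_1)\backslash N_1$, a Poincar\'{e}-type inequality bounds its sup-norm by a multiple of the sup of its first left derivatives $\ell(Y_k)$ along a basis of $\fn_1$; conjugation then yields the $e^{-\alpha}(a)$ factors, and because $r(X)F$ is again cuspidal (right derivation commutes with the constant-term integral) the argument is simply iterated finitely many times. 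No regularization of $v$ is required, so the Dixmier--Malliavin factorization in your step two is unnecessary overhead~--- and it introduces exactly the bookkeeping problem you flag, since one would have to track the dependence of $\phi_i$, $w_i$ on $v$, and then re-factor after each integration by parts.

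Your stated ``principal obstacle'' concerning the non-abelian structure of $N$ is actually not an obstacle. The Poincar\'{e} inequality (or, in your formulation, the representation of a mean-zero smooth function on a compact nilmanifold as a finite sum of $\fn$-derivatives of smooth functions) holds for arbitrary compact quotients of unipotent groups; there is no need to pass to the abelianization, climb the lower central series, or invoke cuspidality along a chain of intermediate parabolics. That part of your plan solves a problem that does not arise. Conversely, you glide over the one piece that does require care and that the paper highlights: uniform moderate growth. Your iteration silently assumes that differentiating $F_{\tau_j,v}$ does not raise the polynomial growth exponent; for generic smooth automorphic forms this would fail, and it is precisely the estimate~(\ref{infty4a}) --- valid because $F_{\tau_j,v}$ comes from a distribution vector paired against a $C^\infty$ vector --- that makes the pushing-down argument terminate after finitely many steps with a bound in terms of a single $\nu_k(v)$.
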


This implies theorem A, as was just mentioned, and also clarifies the dependence of the bounding constant $C$ on
the automorphic form being bounded. The proof of the theorem occupies section \ref{sec:proofmainthm}.

\section{The main theorems}\label{sec:maintheorems}

Theorems B\,--\,D of the introduction assert the rapid decay of cuspidal automorphic forms on subsets of $G$ that
are larger than Siegel sets. These three theorems constitute the main results of our paper. In the present section
we shall state slightly refined versions for automorphic forms of type $F_{\tau_j,v}$, and deduce them from theorem
\ref{thm:decay1}. Just as theorem \ref{thm:decay1} implies theorem A via theorem \ref{thm:uniformlymg}, so the
theorems of this section imply theorems B\,--\,D of the introduction. We continue with the notation of the previous
section. In particular, $\tau_j \in \left((V')^{-\infty}\right)^\G$ is the automorphic distribution attached to an
automorphic representation $j: V \hookrightarrow L^2(\G\backslash G)$, and $\,\nu_k$\,, $k\in\N$\,, a
non-decreasing family of seminorms describing the topology of $V^\infty$.

\begin{thm}
\label{thm:decay2} Let $S_r$ be a compact subset of $G$. If $\tau_j$ is cuspidal, then for each $n\in\N$, there
exists an integer $k=k(\tau_j,n)\in\N$ and a positive constant $C=C(\tau_j,S_r,k,n)$, such that for all $v\in
V^\infty$,\vspace{-4pt}
\[
m\in M\,,\,\ a \in A\,,\,\ g\in S_r\ \ \ \ \Longrightarrow\ \ \ \ |F_{\tau_j,v}(mag)| \ \leq \ C\,\nu_k(v)\,\|a\|^{-n} \,.
\]
\end{thm}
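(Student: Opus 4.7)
The plan is to reduce Theorem \ref{thm:decay2} to Theorem \ref{thm:decay1} by two successive reductions: shrinking $m \in M$ to a compact set using $\Q$-anisotropy of $M$, then handling arbitrary $a \in A$ by a cover of $A$ by conjugates of the shifted positive Weyl chamber $A_\epsilon^+$. First I would use that $M$ is reductive and $\Q$-anisotropic, so by Borel--Harish-Chandra the arithmetic subgroup $\G_M := \G \cap M$ is cocompact in $M$. Fix a compact $\Omega_M \subset M$ with $\G_M \cdot \Omega_M = M$, and for any $m \in M$ write $m = \gamma m_0$ with $\gamma \in \G_M \subset \G$ and $m_0 \in \Omega_M$. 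By the left $\G$-invariance of $F_{\tau_j,v}$, $F_{\tau_j,v}(mag) = F_{\tau_j,v}(m_0 a g)$, so it suffices to bound $|F_{\tau_j,v}(m_0 a g)|$ uniformly for $m_0 \in \Omega_M$, $a \in A$, $g \in S_r$.

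Next, let $W = N_K(A)/Z_K(A)$ be the (finite) relative Weyl group and pick a lift $\tilde w \in N_K(A) \subset K$ for each element. Since the Weyl chambers cover $\fa$ and $\overline{A^+} \subset A_\epsilon^+$ for any $\epsilon < 1$, we have $A = \bigcup_{w \in W} \tilde w\, A_\epsilon^+\, \tilde w\i$. For each $w$, consider the conjugate parabolic $P_w := \tilde w P \tilde w\i$ with Langlands decomposition $M \!\cdot\! A \!\cdot\! N_w$, where $N_w = \tilde w N \tilde w\i$. Because $\tilde w$ normalizes the $\Q$-split torus $A$ and permutes the $\Q$-rational roots, $P_w$ is a minimal $\Q$-parabolic whose Levi factor is still $MA$ (the subgroup $M$ is intrinsically characterized inside $Z_G(A) = MA$ as the common kernel of all $\Q$-characters). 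The cuspidality of $\tau_j$ in the sense of (\ref{cuspidal2}) applies to every $\Q$-unipotent radical, including $N_w$, so Theorem \ref{thm:decay1} is available verbatim for generalized Siegel sets attached to $P_w$.

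Given $m_0 a g$ with $m_0 \in \Omega_M$ and $a \in A$, choose $w \in W$ with $a \in \tilde w A_\epsilon^+ \tilde w\i$. Then $m_0 a g$ lies in the generalized Siegel set $\Omega_M \cdot (\tilde w A_\epsilon^+ \tilde w\i) \cdot S_r$ associated to $P_w$, the left-piece $m_0$ belonging to $M \subset M \cdot N_w$. Theorem \ref{thm:decay1} then yields constants $k_w, C_w$ such that $|F_{\tau_j,v}(m_0 a g)| \leq C_w\, \nu_{k_w}(v)\, \|m_0 a g\|^{-n}$. Since $\Omega_M$ and $S_r$ are compact, submultiplicativity of the operator norm gives $\|a\| \leq \|m_0\i\| \cdot \|m_0 a g\| \cdot \|g\i\| \leq C'\, \|m_0 a g\|$, whence $\|m_0 a g\|^{-n} \leq (C')^n \|a\|^{-n}$. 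Taking the maximum of the finitely many $C_w$ and $k_w$ over $w \in W$ -- using that the seminorms $\nu_k$ are non-decreasing -- yields the desired uniform bound.

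The main obstacle is the Weyl-chamber bookkeeping: Theorem \ref{thm:decay1} only controls $F_{\tau_j,v}$ on the shifted positive chamber $A_\epsilon^+$ attached to $P$, whereas we need control on all of $A$. Routing this via the conjugate parabolics $P_w$ exploits the fact that, unlike the unipotent radical $N$, the Levi factor $MA$ and the cuspidality condition on $\tau_j$ are both stable under conjugation by elements of $N_K(A)$. An alternative attempt to absorb $\tilde w\i$ directly into $\tau_j$ via $\pi'(\tilde w)$ would replace $\G$-invariance by $\tilde w \G \tilde w\i$-invariance, which is less convenient.
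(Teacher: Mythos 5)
Your proof is correct but takes a genuinely different route from the paper's after the first reduction. Both arguments begin identically by using the $\Q$-anisotropy of $M$ to replace $m\in M$ by an element of a compact fundamental domain for $\G\cap M$, and both recognize that the crux is extending control from the positive chamber $A_\epsilon^+$ to all of $A$ by a finite Weyl cover. Where you diverge: you keep $\tau_j$, $\G$, and the Siegel-set machinery associated to \emph{varying} minimal $\Q$-parabolics $P_w=\tilde{w}P\tilde{w}^{-1}$ (using lifts $\tilde{w}\in N_K(A)$), observing that cuspidality in the sense of (\ref{cuspidal2}) and $\G$-invariance are both insensitive to which minimal $\Q$-parabolic one works with, so theorem~\ref{thm:decay1} applies after relabeling $P$. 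The paper instead fixes $P$ once and for all, chooses the Weyl representatives $n_\ell$ in $G_\Q$ (rather than in $K$), and absorbs the left factor $n_\ell$ into the automorphic distribution via $\tau_j\mapsto\pi'(n_\ell^{-1})\tau_j$, which remains an automorphic distribution for the arithmetic group $n_\ell^{-1}\G n_\ell$; theorem~\ref{thm:decay1} is then applied literally as stated, to finitely many conjugated distributions. (You correctly anticipated this alternative in your last paragraph and noted that it becomes awkward with $\tilde{w}\in K$, since $\tilde{w}\G\tilde{w}^{-1}$ would no longer be arithmetic; the paper sidesteps that by choosing the representatives rationally.) Your route has the small virtue of never touching $\tau_j$ or $\G$, at the cost of implicitly strengthening the statement of theorem~\ref{thm:decay1} to all minimal $\Q$-parabolics containing $A$ -- a harmless generalization, since the proof sketched in section~\ref{sec:proofmainthm} only uses the cuspidality hypothesis, which is uniform over all $\Q$-parabolics. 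The paper's route uses theorem~\ref{thm:decay1} exactly as stated. Your norm comparison via submultiplicativity, and the passage to the maximum of the finitely many $C_w$, $k_w$ using monotonicity of the $\nu_k$, are the same bookkeeping as in the paper.
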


\begin{proof}
The action of $\G\cap M$ on the anisotropic group $M$ has a compact fundamental domain $F\subset M$, which allows
us to replace $m\in M$, modulo $\G$, by some $m\in F$. Since $M$ commutes with $A$, we can move the factor $m\in F$
to the right of $A$ and incorporate it into the compact set $S_r$. In other words, we may as well suppose $m=e$.
When $a$ lies in the positive Weyl chamber, the products $ag$, with $g\in S_r$, all lie in the generalized Siegel
set $\mathfrak S(\{e\},1,S_r)$. A finite number translates of the positive Weyl chamber, by elements $n_\ell$ of
the normalizer of $A$ in $G_\Q$, cover all of $A$. Each of the factors $n_\ell^{-1}$ to the right of $a$ can be
incorporated into $S_r$ by enlarging this set. As a factor to the left of $a$, $n_\ell$ disappears when we
substitute $\pi'(n_\ell^{-1})\tau_j$ for $\tau_j$, or equivalently, replace the automorphic representation
(\ref{autorep}) by its own $n_\ell$-conjugate. That conjugate is automorphic with respect to $n_\ell^{-1}\G
n_\ell$, which is another arithmetic subgroup of $G$. Note also that $\|a\|$ and $\|a\,g\|$, with $g$ ranging over
the compact set $S_r$, are mutually bounded. Thus, when we apply theorem \ref{thm:decay1} to the finitely many
translates $\pi'(n_\ell^{-1})\tau_j$, we obtain the estimate we want.
\end{proof}

\begin{rem}
\label{rem:decay2} \rm Theorem \ref{thm:decay2} is not only implied by theorem \ref{thm:decay1}, but also implies
it. Any $\,a\in A_\epsilon^+\,$ acts on $\,\fn\,$ with eigenvalues that are bounded from below. It follows that
$\,a^{-1}\,S_\ell\,a\,$ is contained in a fixed compact set, uniformly for $\,a\in A_\epsilon^+\,$. This set can be
absorbed into $\,S_r\,$.
\end{rem}

Our next statement involves a parabolic subgroup $P_1 \subset G$, which is defined over $\Q$. Since the minimal
$\Q$-parabolic subgroup $P$ is determined only up to $G_{\Q}$-conjugacy, we may as well suppose that $P_1 \supset
P$. Then $P_1$ has Langlands decomposition
\begin{equation}
\begin{aligned}
\label{pone}
&P_1 \ = \ M_1\!\cdot\! A_1 \! \cdot \! N_1\,,\, \ \text{with}\, \ M_1 \supset M\,,\, \ A_1 \subset A\,,\, \ N_1 \subset N\,,\, \ \text{and}
\\
&\ \ N_1\, \ \text{unipotent, connected, defined over $\Q$, normalized by $M_1\!\cdot\! A_1$,}
\\
&\ \ \ \ A_1\, \ \text{abelian, connected, $\Q$-split, central in $M_1\! \cdot\! A_1$\,,}
\\
&\ \ \ \ \ \ M_1\ \ \text{reductive, defined over\,\ $\Q$\,;\,\ moreover, $\,M_1\,$ contains}
\\
&\ \ \ \ \ \ \ \ M_1 \cap P = M\!\cdot\!(M_1 \cap A)\!\cdot\!(M_1 \cap N)\,\ \text{as a minimal $\Q$-parabolic.}
\end{aligned}
\end{equation}
The Lie algebra $\,\fm_1\cap \fa\,$ of $\,M_1\cap A\,$ is spanned by the co-roots $\,H_\a$ corres\-ponding to all
roots $\,\a\in \Phi(\fa,\fm_1)$, so $\,\fm_1\cap \fa \subset \fm + [\fm_1,\fm_1]$. In particular the identity
component of the center of $\,M_1\,$ is contained in the anisotropic group $\,M\,$, and
\begin{equation}
\label{pone1}
\text{$\,M_1\,$ inherits the property (\ref{cptctr}) from $\,G\,$.}
\end{equation}
On the other hand $\fn_1$, the Lie algebra of $N_1\,$, is the direct sum of all the root spaces $\fg^{\alpha}$
which do not lie in $\fm_1$, and which correspond to positive roots $\alpha$. Equivalently, these are the roots
$\alpha\in \Phi^+(\fa,\fg)$ which restrict nontrivially to $\fa_1$. The linear function
\begin{equation}
\label{rhoone}
\rho_1\ : \ \fa \ \longrightarrow \ \R\ ,\ \ \  \rho_1 \ = \
{\sum}_{\textstyle{\begin{smallmatrix}\alpha\in {\Phi^+(\fa,\fg)} \\  { \alpha \equiv \!\!\!\!/ \, 0 \ \text{on}\ \fa_1} \end{smallmatrix}}}\
\textstyle{\frac {\dim(\fg^\alpha)}2}\,\alpha  \ ,
\end{equation}
lifts to a character $\,e^{\rho_1} : A \to \R_{>0}$\,. Its square $e^{2\rho_1}$ is the character by which $A$ acts
on $\wedge^{\text{top}}\fn_1$, the top exterior power of $\fn_1$. Since $P_1$ is normalizer of $N_1$, and since the
adjoint action of $P_1$ on $\fn_1$ is defined over $\Q$, $e^{2\rho_1}$ extends from $A$ to a character
\begin{equation}
\label{rhotwo}
e^{2\rho_1}\ : \ P_1 \ \longrightarrow \ \R^*\,,
\end{equation}
also defined over $\Q$. The absolute value of this character,
\begin{equation}
\label{deltaPbeta}
\delta_{P_1} \ : \ P_1 \ \longrightarrow \ \R_{>0}\,, \ \ \ \ \delta_{P_1} \, = \,  |\,e^{2\rho_1} | \,,
\end{equation}
is the modular function for the quotient $G/P_1$\,.

\begin{thm}\label{thm:decay3}
If $\tau_j$ is cuspidal and $S_r \subset G$ a compact subset, then
\newline
\noindent {\rm a)} for each $n\in \N$ there exist $\,k=k(\tau_j,n)\in \N$ and $C=C(\tau_j,S_r,k,n)>0$ such that,
for all $v\in V^\infty$,
\[ p \in P_1\,,\,\ g \in S_r \
\ \ \Longrightarrow\ \ \ \ |F_{\tau_j,v}(p\,g)| \ \leq \ C\,\nu_k(v) \( \delta_{P_1}(p)\)^{-n} \,;
\]
{\rm b)} for each $n\in \N$ there exist $k=k(\tau_j,n)\in \N$ and $D=D(\tau_j,v, S_r,n)>0$ such that, for all $v\in
V^\infty$,
\[
m \!\in\! M,\, n_1 \!\in\! N_1,\, a\! \in\! M_1 \!\cap\! A,\, g \!\in\! S_r
\ \ \Longrightarrow \ \ |F_{\tau_j,v}(m\, n_1 \,a\,g)| \, \leq \, D \nu_k(v)\|a\|^{-n} \,.
\]
\end{thm}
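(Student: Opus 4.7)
The plan is to deduce both parts of Theorem~\ref{thm:decay3} from Theorem~\ref{thm:decay1} via reduction theory and $\Gamma$-invariance, in direct analogy with the proof of Theorem~\ref{thm:decay2} just given. The key mechanism is that whenever the argument of $F_{\tau_j,v}$ is written as $\gamma\,c\,s$ with $s$ in a generalized Siegel set (via the covering $G=\bigcup_{c\in C}\Gamma\,c\,\mathfrak S(S_\ell,\epsilon,K)$ of \eqref{siegelset2}), the value equals $F_{\pi'(c^{-1})\tau_j,v}(s)$, and Theorem~\ref{thm:decay1} applied to the finitely many conjugate cuspidal distributions $\pi'(c^{-1})\tau_j$ yields a bound of the form $C\,\nu_k(v)\,\|s\|^{-n}$.

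For part~(b), I mimic the proof of Theorem~\ref{thm:decay2}, handling the extra $n_1$-factor. First, using cocompactness of $\Gamma\cap M$ in the $\Q$-anisotropic group $M$, I replace $m$ by an element of a compact fundamental domain $F_M\subset M$. Because $M\subset M_1$ normalizes $N_1$ and commutes with $A\supset M_1\cap A$, I have $m\,n_1\,a = (m n_1 m^{-1})(m a) = n_1'\,a\,m$ with $n_1'\in N_1$, absorbing $m$ into the compact set on the right. Second, using cocompactness of $\Gamma\cap N_1$ in $N_1$, I further replace $n_1'$ by an element of a compact fundamental domain. The resulting element $n_1'\,a\,g''$ (with $n_1',g''$ in compact sets and $a\in M_1\cap A\subset A$) is expressed via the global reduction as $\gamma\,c\,s$ with $s\in\mathfrak S$, so that $|F_{\tau_j,v}(m n_1 a g)| = |F_{\pi'(c^{-1})\tau_j,v}(s)| \le C\,\nu_k(v)\,\|s\|^{-n}$. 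The decay in $\|a\|$ follows from the comparison $\|s\|\gtrsim\|a\|$: since $n_1'\in N_1\subset N$ and $g''$ lies in a compact set, the Iwasawa $A$-projection of $n_1'\,a\,g''$ is comparable to $\|a\|$, and the Siegel $A$-height dominates the Iwasawa $A$-height up to bounded factors.

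For part~(a), use the Iwasawa-type decomposition of the reductive group $M_1$: any $m_1\in M_1$ factors as $m_1 = m_0\,a_M\,n_M\,k_1$ with $m_0\in M$, $a_M\in M_1\cap A$, $n_M\in M_1\cap N$, $k_1\in K\cap M_1$. Combined with the Langlands decomposition $p=m_1\,a_1\,n_1$ of $p\in P_1$, and exploiting $A=A_1\cdot(M_1\cap A)$, $N=(M_1\cap N)\cdot N_1$, the centrality of $a_1\in A_1$ in $M_1\cdot A_1$, and the fact that $k_1\in M_1$ normalizes $N_1$, one rewrites $p = m_0\cdot a_P\cdot n^\sharp\cdot k_1$ with $a_P = a_M\,a_1 \in A$ and $n^\sharp\in N$. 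Applying further $\Gamma\cap M$-reduction of $m_0$ to compact, and running the same Siegel-set argument as in part~(b), produces a bound $|F_{\tau_j,v}(pg)| \le C\,\nu_k(v)\,\|a_P\|^{-n}$. Since $\delta_{P_1}|_M\equiv 1$ (the $\Q$-anisotropic group $M$ admits no non-trivial $\Q$-characters), $\delta_{P_1}|_{N_1}\equiv 1$, and $\delta_{P_1}$ is bounded on the compact $K\cap M_1$, one has $\delta_{P_1}(p)=|e^{2\rho_1}(a_P)|\le C\,\|a_P\|^d$ where $d=2\dim\mathfrak n_1$ (each eigenvalue of $a_P$ acting on $\mathfrak n_1$ is bounded by $\|a_P\|$). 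Replacing the index $n$ by $dn$ converts the bound into the desired $\delta_{P_1}(p)^{-n}$ decay.

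The main technical obstacle, shared by both parts, is the norm-comparison step: verifying that the $A$-component of the Siegel representative $s$ produced by the global reduction is bounded below by $\|a\|$ in part~(b), respectively $\|a_P\|$ in part~(a), up to bounded factors. This rests on the reduction-theoretic fact that the Siegel $A$-height of $g\in G$ is equivalent to the supremum of the Iwasawa $A$-projection over the $\Gamma$-orbit of $g$, together with the explicit observation that for $n_1'\in N$ and $g''$ both ranging over compact sets, the Iwasawa $A$-projection of $n_1'\,a\,g''$ is $\sim\|a\|$ regardless of where $a\in A$ sits.
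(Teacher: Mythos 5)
Your strategy of reducing Theorem~\ref{thm:decay3} to Theorem~\ref{thm:decay1} via reduction theory and $\Gamma$-invariance is in the right spirit, but the step you yourself flag as the ``main technical obstacle'' --- the norm comparison --- is not established, and as stated it is not correct.  You assert that ``the Siegel $A$-height dominates the Iwasawa $A$-height up to bounded factors''.  What Borel's reduction theory (Theorem~16.9, the paper's Lemma~\ref{lem:borel16.9}) actually supplies is a minimality for a {\em single} function of type $(P,e^\mu)$: namely $\phi_\mu(h^{-1})\le\phi_\mu(g^{-1}\gamma' c')$, which unwinds to $e^\mu(a_h)\ge e^\mu(a(g))$ for that one dominant $\Q$-weight $\mu$, where $a_h$ is the Siegel height and $a(g)$ the Iwasawa height.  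This is not a comparison of matrix norms.  Since $\|a\|=\max_\lambda e^\lambda(a)$ can be achieved at a non-dominant weight when $a\in M_1\cap A$ is not $G$-dominant, the inequality $e^{\lambda_0}(a_h)\ge e^{\lambda_0}(a(g))$ does not yield $\|a_h\|\gtrsim\|a\|$.

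There is also a structural obstruction that blocks the cure you implicitly have in mind (conjugating $a$ into the positive Weyl chamber of $G$, as in the proof of Theorem~\ref{thm:decay2}).  For $a\in M_1\cap A$, the simple roots in $\Delta_1$ need not vanish on $\fm_1\cap\fa$, so $a$ can fail to be $G$-dominant even when it is $M_1$-dominant; the element $n_1\,a\,g''$ then is not in a generalized Siegel set.  If you conjugate by a Weyl element $w$ of the full group $G$ to restore $G$-dominance, then $w n_1 w^{-1}$ acquires entries outside $N$ (in the $F_4/Spin(7)$ example of section~\ref{sec:unipotent} this is precisely the phenomenon Jacquet--Shalika's argument fails to handle), and the Iwasawa $A$-projection of the conjugate is no longer controlled by $\|a\|$.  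By contrast, the paper's proof only invokes Weyl elements of $M_1\cap A$ {\em inside} $M_1$ (which normalize $N_1$), and extracts the decay not from a norm comparison of Siegel and Iwasawa heights but from the function $\phi_\mu$ for a carefully engineered $\Q$-dominant weight $\mu$: for part (b), the $\mu$ of (\ref{mu1})--(\ref{mu4}) which restricts to $\ell\rho_{M_1}$ on $\fm_1\cap\fa$, so that Lemma~\ref{lem:phimulem} directly gives $|F_{\tau_j,v}(c^{-1}n_1ag)|\le D\nu_k(v)\bigl(e^{-\ell\rho_{M_1}}(a)\bigr)^n$; and for part (a), $\mu=2\rho_1$, the $\Q$-character of $\wedge^{\text{top}}\fn_1$, for which one checks $p\cdot v_\mu=\pm e^\mu(p)v_\mu$ for all $p\in P_1$, so that $\phi_\mu(g^{-1})$ reads off $\delta_{P_1}(p)$ directly.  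Your sketch of part (a) via an Iwasawa decomposition of $M_1$ runs into the same difficulty: the $n^\sharp\in N$ factor cannot be commuted past $a_P$ without control over the sign of the roots, which is exactly what the choice of $\mu$ in the paper provides.  What you are missing, in short, is not a gap you could fill with more care but the essential device of the paper's proof: the correct choice of highest weight $\mu$ in Lemmas~\ref{lem:borel16.9}--\ref{lem:phimulem}.
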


In view of theorem \ref{thm:uniformlymg}, part a) implies theorem D and part b) implies theorem C.

\begin{rem}
\label{rem:decay3} \rm The theorem applies in particular to $\,P_1 = P\,$ and to $P_1 = G$. In the first case, {\rm
a)} provides a bound on $\,|F_{\tau_j,v}(g)|\,$ for $\,g\in S_\ell\,A_\epsilon^+\,S_r\,$, in terms of
$\,e^{-2\rho}(a)\,$, evaluated on the $\,A_\epsilon^+$-component $\,a\,$ of $\,g\,$; here $\rho$ is the half sum of
all positive roots, each counted with its multiplicity. Since $2\rho$ is dominant and regular, $\,e^{-2\rho}(a)\,$
with $a\in A_\epsilon^+$ can be bounded from above by a multiple of a negative power of $\,\|a\|\,$. Since $S_\ell$
and $S_r$ are compact, the norm $\,\|a\|\,$ in turn can be bounded from above and below in terms of $\,\|g\|\,$.
Thus a), with $P_1=P$, implies theorem  \ref{thm:decay1}. When $P_1=G\,$,\,\ {\rm b)} implies theorem
\ref{thm:decay2} directly, and theorem \ref{thm:decay2} is equivalent to theorem \ref{thm:decay1} by remark
\ref{rem:decay2}. Thus theorem \ref{thm:decay1} is not only used to prove the two parts of theorem
\ref{thm:decay3}, but is also implied by each of them.
\end{rem}

We shall reduce the two estimates in the theorem to theorem \ref{thm:decay1}, using some results in reduction
theory due to Borel and Harish Chandra \cite[\S\S14-16]{Borel:1969b}. These results have typically been applied to
the fundamental representations of the complexified Lie algebra $\fg_\C\,$;\,\ we apply them to suitably chosen
larger representations to obtain the quite general estimates of theorem \ref{rem:decay3}.

We begin with two preliminary lemmas, which may be of independent interest. Let $\mu\in \fa^*$ be a $\Q$-weight,
i.e., the differential of the restriction to $A$ of a $\Q$-character
\begin{equation}
\label{weight1}
e^\mu \, : \, M \! \cdot \! A \ \longrightarrow \ \R^* \,,\ \ \text{with}\ \ e^\mu \, \equiv \, 1 \ \ \text{on}\ \ M^0 \,.
\end{equation}
The notation $e^\mu$ might be taken to suggest that $\mu$ completely determines the character (\ref{weight1}). This
is almost true: any $\Q$-character of the anisotropic group $M$ is trivial on the identity component
\cite[10.5]{Borel:1969b}, so $\mu$ determines $e^\mu$ on the identity component of $M\! \cdot \! A$, which is the
most one can expect. For the same reason the second condition in (\ref{weight1}), i.e., $e^\mu \equiv 1$ on $M^0$,
is auto\-matically satisfied. Since $M\! \cdot \! A$ normalizes $N$, we can extend $e^\mu$ to a character
\begin{equation}
\label{weight1b}
e^\mu \, : \, P \ \longrightarrow \ \R^* \,,
\end{equation}
also defined over $\Q$, which is trivial on the connected unipotent group $N$.

In the following we consider a particular character $e^\mu$ as in (\rangeref{weight1}{weight1b}), and we impose the
additional condition that $\mu$ is dominant, in the sense that
\begin{equation}
\label{weight2}
\textstyle (\alpha,\mu)\geq 0\ \ \text{for all}\ \ \alpha\in\Phi^+(\fa,\fg)\,.
\end{equation}
Then there exists a finite dimensional representation of $G$ on a real vector space $V_\mu$, defined over $\Q$, and
irreducible over $\Q$, of highest weight $\mu$ \cite[\S14.1]{Borel:1969b}. In other words, $V_\mu$ contains a line
$L_\mu$, defined over $\Q$, such that
\begin{equation}
\begin{aligned}
\label{weight3}
&p\cdot v_\mu \ = \ e^\mu(p)\,v_\mu \ \ \text{for\ \ $v_\mu \in L_\mu$ and all $p\in P$\,,\,\ and}
\\
&\text{no $\,\mu + \alpha\,$,\,\ with $\,\alpha\in\Phi^+(\fa,\fg)\,$,\,\ is a weight for $\,V_\mu\,$.}
\end{aligned}
\end{equation}
We equip $V_\mu$ with a $K$-invariant metric, normalize $v_\mu\in L_\mu$ to make it a unit vector, and define
\begin{equation}
\label{phimu}
\phi_\mu \, : \, G\ \longrightarrow\ \R_{>0}\ ,\ \ \ \phi_\mu(g)\,=\, \|\,g\cdot v_\mu\|\,.
\end{equation}
Borel \cite{Borel:1969b} calls this a function of type $(P,e^\mu)$, and uses the notation $\Phi$. Borel lets
$\Gamma$ act on $G$ on the right, which accounts for some differences between his formulas and ours.

The following is a slight variant of Borel \cite[theorem 16.9]{Borel:1969b}. In the case of $G=SL(n,\R)$, equipped
with the standard $\Q$-structure, and when $\mu$ is a fundamental highest weight, it is a fairly standard result in
reduction theory.

\begin{lem}
\label{lem:borel16.9} Given a finite subset $C_0\subset G_\Q$, one can choose a finite subset $C \subset G_\Q$
containing $C_0$ and a generalized Siegel set $\mathfrak S = \mathfrak S(S_r,\epsilon,S_\ell)$, having the
following property: for every $g\in G$ the function
\[
\,\Gamma \! \times \! C  \to  \R_{>0}\,,\ \ \ \ \ (\gamma, c) \ \mapsto \ \phi_\mu (g^{-1}\gamma\,c)\,,
\]
assumes its minimum at some $\, (\gamma,c) \in \Gamma \! \times \! C\, $ such that $\, g \in \gamma\,c\,\mathfrak
S$.
\end{lem}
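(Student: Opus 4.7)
The plan is to derive this from a per-cusp refinement of Borel's Theorem 16.9 in \cite{Borel:1969b}. Borel's global theorem fixes a complete set $C^{\sharp}$ of representatives for $\Gamma\backslash G_\Q/P_\Q$ and produces a single Siegel set $\mathfrak S^{\sharp}=\mathfrak S(S_\ell^{\sharp},\epsilon^{\sharp},K)$ such that for every $g\in G$ the minimum of $(\gamma,c)\mapsto \phi_\mu(g^{-1}\gamma c)$ on $\Gamma\times C^{\sharp}$ is attained at some pair with $g \in \gamma c\,\mathfrak S^{\sharp}$. A closer reading of Borel's proof, which handles each cusp separately before assembling a single global $\mathfrak S^{\sharp}$, yields the following per-cusp form: for every $c\in G_\Q$ there is a Siegel set $\mathfrak S_c$ such that, for every $g\in G$, whenever $\gamma_0\in\Gamma$ minimizes $\gamma\mapsto \phi_\mu(g^{-1}\gamma c)$ over $\Gamma$, one has $g\in \gamma_0 c\,\mathfrak S_c$. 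Moreover, the highest-weight identity $p\cdot v_\mu = e^\mu(p)v_\mu$ yields $\phi_\mu(g^{-1}\gamma c_2) = |e^\mu(p_{12})|\,\phi_\mu(g^{-1}\gamma\gamma_{12}c_1)$ whenever $c_2 = \gamma_{12}c_1 p_{12}$ with $\gamma_{12}\in\Gamma$ and $p_{12}\in P_\Q$, so one may take $\mathfrak S_{c_2} = p_{12}^{-1}\mathfrak S_{c_1}$.

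Taking this refinement for granted, I set $C:=C_0\cup C^{\sharp}$. For each $c\in C_0\setminus C^{\sharp}$, I fix a decomposition $c = \gamma_c c^{\sharp}_c p_c$ with $\gamma_c\in\Gamma$, $c^{\sharp}_c\in C^{\sharp}$, $p_c\in P_\Q$; since $C_0$ is finite, only finitely many $p_c$ appear. I would then construct $\mathfrak S$ as a single generalized Siegel set containing $\mathfrak S^{\sharp}$ together with the finitely many shifts $p_c^{-1}\mathfrak S^{\sharp}$ for $c\in C_0\setminus C^{\sharp}$. To put each $p_c^{-1}\mathfrak S^{\sharp}$ into generalized Siegel form, I use the Langlands decomposition $p_c = m a n\in M\!\cdot\!A\!\cdot\!N$ together with the facts that $M$ and $A$ commute, that both $M$ and $A$ normalize $N$, and that $a^{-1}A_{\epsilon^{\sharp}}^+\subset A_{\epsilon^{(c)}}^+$ for a suitably small $\epsilon^{(c)}>0$ (since the finitely many positive-root values $e^\alpha(a^{-1})$ are bounded below by a positive constant): these relations show $p_c^{-1}\mathfrak S^{\sharp}\subset \mathfrak S(S_\ell^{(c)},\epsilon^{(c)},K)$ for some compact $S_\ell^{(c)}\subset M\!\cdot\!N$. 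Combining $\mathfrak S^{\sharp}$ and these finitely many sets into a common $\mathfrak S=\mathfrak S(S_\ell,\epsilon,K)$ by enlarging $S_\ell$ and shrinking $\epsilon$ is then routine.

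To conclude, fix $g\in G$ and let $(\gamma^{*},c^{*})\in\Gamma\times C$ be a minimizer of $(\gamma,c)\mapsto\phi_\mu(g^{-1}\gamma c)$. Minimality implies that $\gamma^{*}$ minimizes $\gamma\mapsto \phi_\mu(g^{-1}\gamma c^{*})$ over $\Gamma$, so the per-cusp statement applied to $c^{*}$ gives $g\in \gamma^{*}c^{*}\mathfrak S_{c^{*}}$. If $c^{*}\in C^{\sharp}$, then $\mathfrak S_{c^{*}}\subset \mathfrak S^{\sharp}\subset\mathfrak S$ and we are done; if $c^{*}\in C_0\setminus C^{\sharp}$, then $\mathfrak S_{c^{*}} = p_{c^{*}}^{-1}\mathfrak S_{c^{\sharp}_{c^{*}}}\subset p_{c^{*}}^{-1}\mathfrak S^{\sharp}\subset\mathfrak S$ by the construction of $\mathfrak S$, so again $g\in \gamma^{*}c^{*}\mathfrak S$, which is the required conclusion. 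The principal technical obstacle is the per-cusp refinement of Borel's theorem, which requires unwinding his proof of Theorem 16.9 to verify that his construction depends on each cusp representative only through a $P_\Q$-translate; once this is in hand, the manipulation of Siegel sets under $P_\Q$-translation and the combinatorial packaging into a single generalized Siegel set are routine exercises in reduction theory.
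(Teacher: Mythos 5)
There is a genuine gap here, and it concerns exactly the content that the paper's proof actually supplies. Borel's Theorem 16.9 does not apply to an arbitrary function of type $(P,e^\mu)$: as the paper points out, Borel imposes two additional hypotheses (\ref{borel16.9hypo}) on $\phi$, namely (i) right-invariance under $\Gamma\cap P$, and (ii) annihilation by $r(X)$ for $X$ in every root space $\fg^\alpha$ with $\alpha$ supported on the simple roots orthogonal to $\mu$ (invariance under Borel's group $L_{\theta'}$). Verifying (i) and (ii) for the specific $\phi_\mu$ of (\ref{phimu}) is the entire substance of the paper's proof: (i) holds because the $\Q$-line $L_\mu$ is $P$-stable, so $\Gamma\cap P$ acts on it through a map to $GL(1,\Z)\cong\{\pm1\}$; (ii) holds by an $SL(2,\R)$ representation-theoretic argument showing that if $\eta$ restricts to $\alpha\perp\mu$ then $E_\eta v_\mu = 0$ and $H_\eta v_\mu = 0$ force $E_{-\eta}v_\mu = 0$. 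Your proposal simply asserts that ``Borel's global theorem\dots produces a single Siegel set'' for $\phi_\mu$, without checking either hypothesis, so it has no valid starting point.

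Beyond this, the route you lay out is both unnecessary and partly unsound. Unnecessary: the paper observes that in Borel's Theorem 16.9 the set $C$ may be \emph{any} finite subset of $G_\Q$ containing a complete set of representatives for $\Gamma\backslash G_\Q/P_\Q$; so once the hypotheses on $\phi_\mu$ are checked, one simply takes $C$ to be $C_0$ together with such representatives and applies Borel directly --- no per-cusp reduction, no highest-weight translation between cusp representatives, no repackaging of shifted Siegel sets. Partly unsound: the ``per-cusp refinement'' you posit would require $\bigcup_{\gamma\in\Gamma}\gamma\,c\,\mathfrak S_c = G$ for a single fixed $c$ (otherwise there are $g$ for which no $\gamma_0$ can put $g$ in $\gamma_0 c\,\mathfrak S_c$, yet the minimum of $\gamma\mapsto\phi_\mu(g^{-1}\gamma c)$ over $\Gamma$ is still attained); but in the presence of several cusps one needs the union over a complete set of representatives to cover $G$, as in (\ref{siegelset2}). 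It is precisely the joint minimization over $\Gamma\times C$ that locates the correct cusp for a given $g$, and one cannot in general localize this to a single $c$.
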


In \cite{Borel:1969b}, the role of $C_0$~-- there denoted by $C$~-- is played by any finite subset $C_0\subset
G_\Q$ containing a complete set of representatives of the set of cusps $\G\backslash G_\Q/P_\Q$. In our application
below what matters is only that the set $C$ in the lemma contains the identity.

\begin{proof} Borel states this estimate in terms of functions $\phi : G \to \R_{>0}$ which are comparable, but not
necessarily equal, to the function $\phi_\mu$ of (\ref{phimu}). On the other hand, he does not require the
triviality of the character $e^\mu$ on all of $Z_G^0$, but only on $Z_G^0\cap [M,M]$. Borel therefore needs to
impose two additional hypotheses: the function $\phi = \phi_\mu$ must satisfy
\begin{equation}
\label{borel16.9hypo}
\begin{alignedat}{2}
&\text{i)}\,\  &&\text{$\phi(g\,\gamma) = \phi(g)\,$ for every $\,\gamma \in \Gamma \cap P\,$, and}
\\
&\text{ii)}\,\ &&\text{$r(X)\phi = 0$ for every $X\in\fg^\alpha$,\,\ provided $\alpha\in \Phi(\fa,\fg)$ lies in}
\\
&\  &&\text{the $\Z$-linear span of the simple roots $\beta$ such that $(\mu,\beta)=0$\,};
\end{alignedat}
\end{equation}
here, as before, $r(X)$ denotes infinitesimal right translation by $X$. Note that ii) is the infinitesimal version
of right invariance of the function $\phi_\mu$ under the action of the connected Lie group which Borel denotes by
$L_{\theta'}$. Our lemma will follow directly from Borel \cite[theorem 16.9]{Borel:1969b} once we show that the
function $\phi_\mu$ defined in (\ref{phimu}) automatically satisfies these two conditions i), ii).

The line $L_\mu$ in (\ref{weight3}) is defined over $\Q$, invariant under $P$, and therefore invariant under the
arithmetic subgroup $\Gamma \cap P$. The resulting homomorphism $\Gamma \cap P \to GL(L_\mu)$ must then factor
through $GL(1,\Z) \cong \pm 1$, which implies i). To establish ii), we complexify $V_\mu$ and $\fg$, and we extend
$\fa$ to a Cartan subalgebra $\fb \subset \fg$. Every $\alpha \in \Phi(\fa,\fg)$ arises as the restriction to $\fa$
of some $\eta \in \Phi(\fb_\C,\fg_\C)$, the root system of the complex reductive Lie algebra $\fg_\C = \C\otimes_\R
\fg$ with respect to its Cartan subalgebra $\fb_\C = \C \otimes_\R\fb$. Because of (\ref{weight3}) every root space
$\fg^\alpha$, with $\alpha \in \Phi^+(\fa,\fg)$, must annihilate $v_\mu$. Thus, if ii) were to fail, there would
exist a root $\eta \in \Phi(\fb_\C,\fg_\C)$, which restricts to a positive root $\alpha = \eta|_{\fa}\in
\Phi^+(\fa,\fg)\cap \mu^\perp$, and a root vector $E_{-\eta}$ in the one dimensional $(-\eta)$-root space
relatively to the Cartan subalgebra $\fb_\C$, such that $E_{-\eta}v_\mu \neq 0$. If we choose a generator $E_\eta$
of the $\eta$-root space and scale it appropriately, the triple $E_\eta$, $E_{-\eta}$, $H_\eta =
[E_\eta,E_{-\eta}]$ satisfies the commutation relations $[H_\eta, E_{\pm \eta}] = \pm 2 E_{\pm \eta}$. Also
$H_\eta\in \fb$ corresponds to $2(\eta,\eta)^{-1}\eta \in \fb^*$ via the isomorphism $\fb \cong \fb^*$ determined
by any $Ad$-invariant symmetric bilinear form on $\fg_\C$. Because of (\ref{weight1}), $\mu$ restricts trivially to
$\fb_\C \cap \fm_\C$. Also, $\eta|_\fa = \alpha$ by definition, and $\alpha$ was chosen so that $\alpha \perp \mu$.
Thus $(\eta,\mu)=0$, which now implies $H_\eta \,v_\mu = 2\frac{(\eta,\mu)}{(\eta,\eta)} v_\mu =0$. On the other
hand $E_\eta \in \C\otimes_\R\fg^\alpha$, so $E_\eta \,v_\mu = 0$ by (\ref{weight3}). If both $E_{\eta}$ and
$H_\eta$ annihilate a vector in a finite dimensional representation of $SL(2,\R)$, then so does $E_{-\eta}$ -- see,
for example, \cite[theorems III.8, III.12]{Jacobson}. This contradicts the earlier assumption that
$E_{-\eta}\,v_\mu \neq 0$, and establishes ii) by contradiction. The lemma thus follows from \cite[theorem
16.9]{Borel:1969b}.
\end{proof}

\begin{lem}
\label{lem:phimulem} Let $\tau_j$ be cuspidal, $S_r \subset G$ a compact subset, and $c \in G_\Q$. For each $n\in
\N$ there exist $k=k(\tau_j,n)\in\N$ and $D=D(\tau_j,S_r,c,n,k)>0$ such that $F_{\tau_j,v}(c^{-1}g\,g_r) \leq
D\,\nu_k(v)\, \phi_\mu(g^{-1})^n$, for all $g\in G$, $g_r \in S_r$, $v\in V^\infty$.
\end{lem}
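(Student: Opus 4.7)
The plan is to reduce the desired bound to theorem \ref{thm:decay1} by applying lemma \ref{lem:borel16.9} to move $g' := c^{-1}gg_r$ into a generalized Siegel set (modulo a left factor in $\Gamma \times C$), and then to translate the resulting $\|a\|^{-m}$ decay into $\phi_\mu(g^{-1})^n$ decay via the explicit behavior of $\phi_\mu$ on the Siegel set.

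Concretely, I invoke lemma \ref{lem:borel16.9} with $C_0 \supseteq \{c^{-1}\}$, obtaining a finite set $C \supseteq \{c^{-1}\}$ and a generalized Siegel set $\mathfrak S = \mathfrak S(S_\ell,\epsilon,S_r^{\mathfrak S})$. For $g \in G$, $g_r \in S_r$, the lemma applied at $g'$ produces $(\gamma,c_*) \in \Gamma \times C$ and $s \in \mathfrak S$ with $g' = \gamma c_* s$ and the minimum property $\phi_\mu(s^{-1}) \leq \phi_\mu((g')^{-1}\gamma'c')$ for all $(\gamma',c') \in \Gamma \times C$. Testing against the admissible point $(\gamma',c') = (1,c^{-1})$ and using $(g')^{-1}c^{-1} = g_r^{-1}g^{-1}$, together with the uniform boundedness of the operator norm of $g_r^{-1}$ on $V_\mu$ for $g_r$ in the compact set $S_r$, yields $\phi_\mu(s^{-1}) \leq C_1\,\phi_\mu(g^{-1})$ with $C_1 = C_1(S_r)$. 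Independently, writing $s = g_\ell\,a\,g_r^*$ with $g_\ell \in S_\ell \subset MN$, $a \in A_\epsilon^+$, $g_r^* \in S_r^{\mathfrak S}$, and using that $P$ acts on the highest weight line $L_\mu$ by $e^\mu$ and $MN \subset P$, one obtains $s^{-1}v_\mu = e^\mu(g_\ell)^{-1}\,e^{-\mu}(a)\,(g_r^*)^{-1}v_\mu$; compactness of $S_\ell$ and $S_r^{\mathfrak S}$ then gives $\phi_\mu(s^{-1}) \asymp e^{-\mu}(a)$ and $\|s\| \asymp \|a\|$ with implied constants depending only on $\mathfrak S$.

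Next, $\Gamma$-invariance of $F_{\tau_j,v}$ gives $F_{\tau_j,v}(g') = F_{\tau_j,v}(c_*s) = F_{\tilde\tau_{c_*},v}(s)$, where $\tilde\tau_{c_*} := \pi'(c_*^{-1})\tau_j$ is a cuspidal distribution for the arithmetic subgroup $c_*^{-1}\Gamma c_*$; cuspidality is preserved because conjugation by $c_* \in G_\Q$ permutes the unipotent radicals of proper $\Q$-parabolic subgroups, and it is only these unipotent radicals that enter the definition (\ref{cuspidal2}). Applying theorem \ref{thm:decay1} to each of the finitely many $\tilde\tau_{c_*}$, $c_* \in C$, on $\mathfrak S$, and taking the maximum of the resulting indices and constants, yields: for every $m \in \N$ there exist $k = k(\tau_j,m)$ and $C_2 > 0$ with $|F_{\tilde\tau_{c_*},v}(s)| \leq C_2\,\nu_k(v)\,\|s\|^{-m}$ for all $s \in \mathfrak S$, $v \in V^\infty$, $c_* \in C$.

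Given $n \in \N$, I finish by choosing $m$ so large that $\|a\|^{-m} \leq C_3\,e^{-n\mu}(a)$ on $A_\epsilon^+$, equivalently $e^{n\mu}(a) \leq C_3\,\|a\|^m$, which holds because $\mu \in \fa^*$ is fixed while the weights of the faithful representation defining $\|\cdot\|$ span $\fa^*$ (by injectivity on $A$), so $e^\mu(a)$ is polynomially dominated by $\|a\|$ on the cone $A_\epsilon^+$. Chaining the estimates together,
\[
|F_{\tau_j,v}(c^{-1}gg_r)| \ \leq \ C_2\,\nu_k(v)\,\|s\|^{-m} \ \leq \ C_4\,\nu_k(v)\,e^{-n\mu}(a) \ \asymp \ \nu_k(v)\,\phi_\mu(s^{-1})^n \ \leq \ D\,\nu_k(v)\,\phi_\mu(g^{-1})^n,
\]
with $D = D(\tau_j,S_r,c,n,k)$ absorbing $C_1^n, C_2$ and the asymptotic constants. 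The main obstacle is this comparison $\|a\|^{-m} \leq C\,e^{-n\mu}(a)$: because $\mu$ need not be regular, $\phi_\mu$ can be insensitive to certain $A$-directions, and $m$ must be chosen large enough depending on $n$ and on how $\mu$ decomposes against the weights of the faithful representation; every other step is either routine compactness bookkeeping or a direct appeal to lemma \ref{lem:borel16.9} and theorem \ref{thm:decay1}.
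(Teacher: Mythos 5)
Your proof is correct and follows essentially the same route as the paper's: both invoke lemma~\ref{lem:borel16.9} to decompose the argument of $F_{\tau_j,v}$ into a $\Gamma\cdot C\cdot\mathfrak S$ factorization, apply theorem~\ref{thm:decay1} on the Siegel-set component (after translating $\tau_j$ by elements of the finite set $C$), and then convert the resulting $\|\cdot\|^{-m}$ decay into $\phi_\mu(g^{-1})^n$ decay via a polynomial comparison between the linear norm and $\phi_\mu$. The only cosmetic differences are that the paper first reduces to $c=e$, $g_r=e$ (whereas you seed $C_0\ni c^{-1}$ and absorb $g_r$ at the minimality-test step), and the paper deploys the global two-sided bound $D_2^{-1}\|g^{-1}\|^{-m}\le\phi_\mu(g)\le D_2\|g\|^m$ where you instead unwind the Siegel-set decomposition $s=g_\ell\,a\,g_r^*$ and compare $e^{-\mu}(a)$ with $\|a\|$ directly on $A$.
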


\begin{proof}
Let us argue, first of all, that it suffices to establish this bound for $c=e$ and $S_r = \{e\}$. If we replace $g$
by $g\,g_r^{-1}$, we must bound $|F_{\tau_j,v}(c^{-1}g)|$ in terms of $\phi_\mu(g_r\,g^{-1})^n =
\|g_r\,g^{-1}\!\cdot \! v_\mu\|^n$. As $g_r$ ranges over the compact set $S_r$, its operator norm, acting on
$V_\mu$, is uniformly bounded from above and below. Thus, after modifying $D$ appropriately, we may set $g_r = e$.
The estimate to be proved does not involve $\G$ explicitly. Thus, once it is proved for any $\G$ but only with $c =
e$, it will also apply to the function $g \mapsto F_{\tau_j,v}(c^{-1}g)$, which is automorphic with respect to the
arithmetic group $\, c\,\G c^{-1}\,$. Thus we may also suppose that $c=e$.

For the reasons just mentioned, theorem \ref{thm:decay1} applies not only to $F_{\tau_j,v}$, but also the functions
$g \mapsto F_{\tau_j,v}(c^{-1}g)$ with $c$ ranging over any given finite subset $C\subset G_\Q$. Thus, when we
choose $C$ and $\mathfrak S$ as in  lemma \ref{lem:borel16.9}, then for every $n\in \N$ there exist
$k=k(\tau_j,n)\in\N$ and $D_1
>0$, such that
\begin{equation}
\label{normbound}
g \in \mathfrak S\,, \ c\in C\,, \ v\in V^\infty \ \ \Longrightarrow \ \ |F_{\tau_j,v}(c\, g)| \, \leq \, D_1\,\nu_k(v)\,\|g\|^{-n}\,.
\end{equation}
According to our original hypotheses, $G$ is a finite cover of a subgroup $G_{\text{lin}}$ of $GL(N,\R)$, and the
norm $\|g\|$ in (\ref{normbound}) is the operator norm of $g$ acting on $\R^N$. The representation of $G$ on
$V_\mu$ drops to an algebraic homomorphism from $G_{\text{lin}}$ to $GL(V_\mu)$, and this implies that the operator
norm of $g \in G$ acting on $V_\mu$ can be bounded in terms of some positive integral power\begin{footnote}{This
depends on our hypothesis (\ref{cptctr}): the operator norm of $a\in GL(1,\C)\cong \C^*$ acting by the tautological
representation is the absolute value $|a|$, and no multiple of $|a|^m$, $m\in\N$,  bounds $|a|^{-1}$, the operator
norm under the dual representation, simultaneously for all $a\in\C^*$.}\end{footnote} of $\|g\|$. Thus $\phi_\mu(g)
= \|g\! \cdot \! v_\mu\|$ can be bounded from above by a multiple of $\|g\|^m$ for some $m\in \N$. On the other
hand, $1 = \|v_\mu\| = \|\,g^{-1} \cdot g \cdot v_\mu\,\|\,$, so $\phi_\mu(g)$ can be bounded from below by the
reciprocal of the operator norm of $g^{-1}$, acting on $V_\mu$, and hence in terms of a negative power of
$\|g^{-1}\|$. To summarize, there exist $D_2>0$ and $m\in\N$ so that for all $g\in G$,
\begin{equation}
\label{normbound1}
D_2^{-1}\,\|g^{-1}\|^{-m}\ \leq \ \phi_\mu(g) \ \leq \ D_2\,\|g\|^m\,.
\end{equation}
Now let $g$ be given. According to lemma \ref{lem:borel16.9}, we can choose $c_1\in C$, $\gamma\in \G$, and $h \in
G$ such that
\begin{equation}
\label{normbound3}
g \, = \, \gamma\,c_1\,h\,,\ \ h\in \mathfrak S\,,\ \ \text{and}\ \ \phi_\mu(h^{-1})\, \leq \,\phi_\mu(g^{-1})\,.
\end{equation}
Thus, view of (\ref{normbound}) and (\ref{normbound1}),
\begin{equation}
\begin{aligned}
\label{normbound4}
&|F_{\tau_j,v}(g)| \, = \, |F_{\tau_j,v}(\gamma \, c_1\, h)| \,=\, |F_{\tau_j,v}(c_1\, h)| \, \leq \, D_1\,\nu_k(v)\,\|h\|^{-n}
\\
&\qquad\ \ \ \leq \, D_1\,D_2^{\,n/m}\,\nu_k(v)  \,\phi_\mu(h^{-1})^{n/m}\,\leq \, D_1\,D_2^{\,n/m} \,\nu_k(v) \,\phi_\mu(g^{-1})^{n/m}\,.
\end{aligned}
\end{equation}
The lemma follows: after adjusting $k$ and $D_1$, one can replace $\,n\,$ by $\,n\,m\,$.
\end{proof}

\noindent {\it Proof\/} of theorem \ref{thm:decay3}.\ \ It is customary to let $\rho$ denote the half sum of the
positive roots. In our situation, since the root spaces may have dimension greater than one, it is more convenient
to set
\begin{equation}
\label{rhodef}
\rho \ \ = \ \ {\sum}_{\alpha\in\Phi^+(\fa,\fg)} \,\textstyle \frac{\dim \fg^\alpha}{2} \,\alpha \ ,
\end{equation}
which is consistent with the definition (\ref{rhoone}) of $\rho_1$. We also define
\begin{equation}
\label{rhomonedef}
\rho_{M_1} \ \ = \ \ {\sum}_{\alpha\in\Phi^+(\fa,\fm_1\oplus\fa_1)} \,\textstyle \frac{\dim \fg^\alpha}{2} \,\alpha\,;
\end{equation}
here $\Phi^+(\fa,\fm_1\oplus\fa_1) = \Phi(\fa,\fm_1\oplus\fa_1)\cap\Phi^+(\fa,\fg)$ denotes the set of positive
roots for the root system of $\fa$ acting on $\fm_1\oplus\fa_1$. These are precisely the roots in $\Phi^+(\fa,\fg)$
that vanish identically on $\fa_1$. Hence
\begin{equation}
\label{sumofrho}
\rho \ \ = \ \ \rho_1 \ +\ \rho_{M_1}\,.
\end{equation}
We had remarked already that $2\rho_1$ lifts to an algebraic character of $P_1$, defined over $\Q\,$; this is the
character by which $P_1$ acts on $\wedge^{\text{top}}\fn_1$. As was remarked below (\ref{weight1}), any
$\Q$-character is trivial on the identity component of the anisotropic group $M$. Recall also that $P_1$ contains
$P$. Thus $2\rho_1$ can play the role of $\mu$ in (\rangeref{weight1}{weight1b}).

Let $\Delta$ denote the set of simple roots in $\Phi^+(\fa,\fg)$. With the convention (\ref{rhodef}), the familiar
identity $2(\rho,\beta)=(\beta,\beta)$ for $\beta\in\Delta\,$ gets replaced by
\begin{equation}
\label{rhoprop}
2\, \frac{(\rho,\beta)}{(\beta,\beta)} \ \ = \ \ \dim \fg^\beta \, + \, \dim \fg^{2 \beta} \ \ \ (\,\beta \in \Delta)\,.
\end{equation}
If $2\beta$ is not a root, $\dim \fg^{2\beta}$ should be interpreted as zero, of course. We can express
$\,\Delta\,$ as the disjoint union
\begin{equation}
\label{Delta}
\begin{aligned}
\Delta \ = \ \Delta_1 \cup \Delta_{M_1}\,,\ \ \text{with} \ \ \Delta_1 \,=\, \{\,\beta\in \Delta\,\mid\, \beta|_{\fa_1}\neq 0\,\}\,,
\\
\Delta_{M_1} \,=\, \{\,\beta\in \Delta\,\mid\, \beta|_{\fa_1} = 0\,\}\,.
\end{aligned}
\end{equation}
Equivalently $\Delta_{M_1}$ is the set of simple roots in $\Phi^+(\fa,\fm_1 \oplus \fa_1)$. Since $[M_1,M_1]$ must
act trivially on the one dimensional space $\wedge^{\text{top}}\fn_1$,
\begin{equation}
\label{rhoprop1}
\beta \in \Delta_{M_1} \ \ \ \ \Longrightarrow\ \ \ \ (\beta,\rho_1)\,=\,0\,.
\end{equation}
On the other hand, every $\beta\in\Delta_1$ has a non-positive inner product with any other simple root, hence with
any $\eta\in\Delta_{M_1}$. Every $\alpha\in\Phi^+(\fa,\fm_1\oplus\fa_1)$ is an integral linear combination, with
non-negative integral coefficients, of the $\eta\in\Delta_{M_1}$, so $\beta\in\Delta_1$ implies
$(\beta,\rho_{M_1})\leq 0$. Thus, in view of (\ref{sumofrho}) and (\ref{rhoprop1}), $2\rho_1$ is dominant.

Since we have shown that $2\rho_1$ satisfies all the relevant hypotheses (\rangeref{weight1}{weight2}), we can
apply lemma \ref{lem:phimulem} with $\mu = 2\rho_1$. We claim: the identity $\,p\!\cdot\,\!v_\mu = e^\mu(p)\,v_\mu$
in (\ref{weight3}), with $\mu=2\rho_1$, holds not only for $p\in P$, but even for $p\in P_1^0$. Indeed, part ii) of
(\ref{borel16.9hypo}) ensures that $[M_1,M_1]^0$ acts trivially on $v_{2\rho_1}$. This implies the claim, because
$P^0$ and $[M_1,M_1]^0$ generate $P_1^0$. The finite group $P_1/P_1^0$ must then act by $\pm 1$ on the invariant
line $L_{2\rho_1}$, so the identity $\,p\cdot v_{2\rho_1} = e^\mu(p)\,v_{2\rho_1}$ holds at least up to sign for
all $p\in P_1$. Part a) of the theorem now follows from (\ref{deltaPbeta}), (\ref{phimu}), and lemma
\ref{lem:phimulem}.

The simple roots constitute a basis of $\fa^*$. For $\beta\in\Delta\,$, we can therefore define $\mu_\beta\in\fa^*$
by the identities
\begin{equation}
\label{mubeta}
\textstyle 2\frac{(\,\eta \,,\, \mu_\beta\,)}{(\,\eta \,,\, \eta\,)} \ = \ \delta_{\beta,\eta} \! \cdot \! \left(\, \dim \fg^\beta + \dim \fg^{2 \beta} \,\right) \ \ \ \text{for all $\,\eta\in\Delta$}\,.
\end{equation}
In the following, $\ell$ denotes a strictly positive integer that will be specified later. The quantity
\begin{equation}
\label{mu1}
\mu \ \ = \ \ \ell \cdot{\sum}_{\beta\in\Delta_{M_1}}\, \mu_\beta
\end{equation}
satisfies
\begin{equation}
\label{mu2}
\begin{aligned}
&\beta\in\Delta_1 \ \ \ \Longrightarrow \ \ \ (\,\mu\,,\,\beta\,) \ = \ 0\,,\ \ \ \text{and}
\\
&\beta\in\Delta_{M_1} \ \ \Longrightarrow \ \ \textstyle 2\frac{(\,\mu\,,\,\beta\,)}{(\,\beta\,,\,\beta\,)} \, =
\, \ell\,(\dim \fg^\beta + \dim \fg^{2 \beta}) \,
= \, 2\ell\frac{(\,\rho_{M_1}\,,\,\beta\,)}{(\,\beta\,,\,\beta\,)}\,;
\end{aligned}
\end{equation}
cf. (\rangeref{sumofrho}{rhoprop}) and (\rangeref{rhoprop1}{mubeta}). In particular,
\begin{equation}
\label{mu3}
\mu \ \ \text{and}\ \ \ell\,\rho_{M_1}\ \ \text{have the same restriction to}\ \ \fa \cap \fm_1 \,,
\end{equation}
since $\,\Delta_{M_1}\,$ spans the dual space of $\,\fa\cap \fm_1\,$.

The quantity $\,\mu\in\fa^*\,$ is defined over $\,\Q\,$. Thus, if $\ell$ is chosen appropriately, $\mu$ lifts to an
algebraic character $e^\mu$ on the Zariski closure of $A$, a character that is defined over $\,\Q\,$. We now
replace this choice of $\ell$ by a multiple of itself so that $e^\mu$ becomes identically equal to one on the
intersection of $M$ with the Zariski closure of $A$. It can then extended to $M\!\cdot \!A \! \cdot \! N = P$,
trivially across $M\! \cdot \! N$. To summarize, $\ell$ can be chosen so that $\mu$ lifts to an algebraic character
\begin{equation}
\label{mu4}
e^\mu \, : \, P \,\rightarrow \, \R^*\,,\ \ \text{defined over $\,\Q\,$,\,\ with}\ \ e^\mu \,\equiv\, 1 \ \ \text{on}\ \ M\!\cdot \! N\,.
\end{equation}
With this choice of $\ell$, the character $e^\mu$ satisfies the hypotheses of lemma \ref{lem:phimulem}, in
particular (\rangeref{weight1}{weight2}). Hence the lemma, in conjunction with (\ref{weight3}), (\ref{phimu}),
(\ref{mu3}), and (\ref{mu4}) implies: for $c\in G_\Q\,,\ n_1 \in N_1\,,\ a\in M_1\cap A\,,\ g\in S_r$\,,
\begin{equation}
\label{mu5}
|F_{\tau_j,v}(c^{-1}\, n_1 \,a\,g)|  \ \leq \ D\,\nu_k(v)\,(\,e^{-\ell\rho_{M_1}}(a)\,)^{n}\,.
\end{equation}
For $H$ in the positive Weyl chamber $(\fm_1 \cap \fa)^+$, $\langle \,\rho_{M_1},H\,\rangle $ is bounded from below
by a positive multiple of $\|H\|$. On the group level, it follows that $e^{-\ell\rho_{M_1}}(a)$, with $a$ in the
positive Weyl chamber $(M_1 \cap A)^+\subset(M_1 \cap A)$, is bounded from above by some negative power of $\|a\|$.
Thus (\ref{mu5}) implies the estimate in b), at least for $m=e$ and $a \in (M_1 \cap A)^+$. Since $M_1$ normalizes
$N_1$, we can let $c$ run over a complete set of representatives for the Weyl group of $M_1\cap A$ in $M_1$, to
extend the validity of the estimate to all $a \in (M_1 \cap A)$; the argument is the same as the one in the proof
of theorem \ref{thm:decay2}. It remains to justify the presence of the factor $m\in M$ in b). Since $M$ is
anisotropic, $\G \cap M$ is cocompact in $M$, so it suffices to consider $m$ in a compact subset of $M$. Since $M$
normalizes $N_1$ and commutes with $A$, we can move the factor $m$ across $n$ and $a$ and incorporate it into a
larger, but still compact subset $S_r\subset G$. \qed

\section{Unipotent averaging}\label{sec:unipotent}

We continue with the hypotheses and notation of the previous section. Some derivations of functional equations for
$L$-functions involve averaging automorphic forms against a character on the unipotent radical $N_1$ of a parabolic
subgroup $P_1 \subset G$, followed by integration over a subgroup of the Langlands-Levi component $M_1$. Ginzburg
\cite[p.\,102]{Ginzburg:Crelle1995} discusses this situation from a formal point of view. We begin with a
description of the general setting.

Let $\,P_1 = M_1\!\cdot \! A_1\!\cdot \! N_1\,$ be a $\Q$-parabolic subgroup of $G$, as in (\ref{pone}). The
conjugation action of $M_1$ on $N_1$ induces an action of $M_1$ on the group of unitary abelian characters
$\,\widehat{N_1}\,$. In the following, we fix a $\,\psi \in \widehat{N_1}\,$ such that
\begin{equation}
\label{psi1}
\psi \equiv 1 \,\  \text{on}\ \,\Gamma \cap N_1\,.
\end{equation}
Then one can define the averaging operator
\begin{equation}
\label{psi2}
I_\psi  :   C^\infty(\Gamma\backslash G)\ \to \ C^\infty(G)\,, \, \
(I_\psi F)(g)\, =  \int_{(\Gamma\cap N_1)\backslash N_1}\!\! \psi(u^{-1}) \, F(u\,g)\,du \ ;
\end{equation}
here Haar measure on $N_1$ is to be normalized so that $\,(\Gamma\cap N_1)\backslash N_1\,$ has total measure one.
Since $\,P_1\,$ is a $\Q$-parabolic subgroup, its unipotent radical $\,N_1\,$ is defined over $\,\Q\,$. Via the
exponential map the Lie algebra $\,\fn_1\,$ inherits a rational structure $\,\fn_{1,\Q}\,$. In view of
(\ref{psi1}), the values of the differential $\,\psi_*\,$ on the set $\,\log(\Gamma\cap N_1)\,$ lie in $\,2 \pi
i\,\Z\,$. In particular $\,\frac{1}{2\pi i}\,\psi_*\,\in \fn_{1,\Q}^*\,$, so
\begin{equation}
\label{psi3}
\begin{aligned}
&M_{1,\psi} \ =_{\text{def}} \ \{\, m \in M_1 \, \mid \, m\cdot\psi = \psi\,\} \ \ \text{is defined over $\,\Q$\,, and}
\\
&\Gamma \cap M_{1,\psi}\, \ \text{is an arithmetic subgroup of}\,\ M_{1,\psi}\,.
\end{aligned}
\end{equation}
The condition (\ref{psi1}) implies that $I_\psi F$, for $F\in C^\infty(\Gamma\backslash G)$, restricts to a
function on $M_{1,\psi}$ which is invariant under $\Gamma \cap M_{1,\psi}$\,. Thus we can define
\begin{equation}
\label{psi4}
A_\psi  :   C^\infty(\Gamma\backslash G)\ \to \ C^\infty\big((\Gamma\cap M_{1,\psi})\backslash M_{1,\psi}\big)\,, \, \ \ \ \
A_\psi F \ = \ I_\psi \big|_{M_{1,\psi}} \,.
\end{equation}
Even if $F$ is an automorphic form, $A_\psi F$ will generally not be an automorphic form on $\,M_{1,\psi}\,$, and
neither does the cuspidality of $F$ imply the cuspidality of $A_\psi F$.

The group $\,M_1\,$ inherits the property (\ref{cptctr}) of having a compact center from $\,G\,$, as was remarked
in (\ref{pone1}). However, $\,M_{1,\psi}\,$ need not satisfy this standing hypothesis on $\,G\,$, nor does
$\,M_{1,\psi}\,$ have to be semisimple. But $\,M_{1,\psi}\,$ has a Levi decomposition
\begin{equation}
\label{psi5}
\begin{aligned}
&M_{1,\psi}\ = \ U \!\cdot \! L \,,\ \ \text{with}
\\
&\qquad L\,\ \text{reductive, defined over $\Q$\,, and}
\\
&\qquad\qquad U\,\ \text{unipotent, defined over $\Q$\,, normalized by $L$.}
\end{aligned}
\end{equation}
Then $L\subset M_1$ is a reductive $\Q$-subgroup. Theorem \ref{thm:decay4} below will only use this property of
$L$, not the specific connection between $L$ and the character $\psi$ in (\ref{psi6}).

The subgroup $L$ may not be well positioned relative to $A$, i.e., $L\cap A$ need not be a maximal connected,
abelian, reductive, $\Q$-split subgroup of $L$. However, the original choices of $P$ and $A$, within the class of
minimal $\Q$-parabolic subgroups of $G$ and split components, were subject only to the conditions (\ref{pone}). We
may therefore start out with a maximal connected, abelian, reductive $\Q$-split subgroup of $L$, extend it to a
subgroup of the same type of $M_1$, and then further to a minimal $\Q$-parabolic subgroup of $M_1$. The direct
products of the resulting groups with, respectively, $A_1$ and $A_1 \! \cdot \! N_1$ can then play the roles of $A$
and $P$, without affecting the validity of (\ref{pone}). In other words, we may and shall suppose that
\begin{equation}
\label{psi6}
L \cap A\,\subset\, L\ \ \text{is maximal connected, abelian, reductive, $\Q$-split}\,.
\end{equation}
In the following, $\mathfrak S_L \subset L$ shall denote a Siegel set, or a generalized Siegel set, defined with
respect to $L \cap A$ and some minimal $\Q$-parabolic subgroup of $L$ with split component $L\cap A$, but otherwise
arbitrary.

\begin{thm}\label{thm:decay4}
Let $\tau_j$ be cuspidal. For any $c \in (M_1)_\Q$, $n\in \N$, and compact subset $S_r \subset G$, there exist
$k=k(\tau_j,n)\in\N$ and $C = C(c, \mathfrak S_L, n, k, S_r)>0$ such that for all $v\in V^\infty$,\vspace{-6pt}
\[
n_1 \in N_1\,,\ g \in \mathfrak S_L\,,\ g_r \in S_r\
\ \ \Longrightarrow\, \ \ \ |F_{\tau_j,v}(n_1 \,c\,g\,g_r)| \ \leq \ C\,\nu_k(v)\, \|g\|^{-n} \,.
\]
\end{thm}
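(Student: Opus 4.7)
The plan is to invoke Lemma~\ref{lem:phimulem} with a $\Q$-rational weight $\mu\in\fa^*$ whose restriction to $L\cap A$ is strictly dominant for $L$'s positive Weyl chamber. Since $c\in(M_1)_\Q$ normalizes $N_1$, we have $n_1\,c = c\,n_1'$ with $n_1' = c\i n_1 c \in N_1$. Applying Lemma~\ref{lem:phimulem} to $F_{\tau_j,v}(c\cdot(n_1'g)\cdot g_r)$---with the lemma's $c\i$ taken to be our $c$, its $g$ taken to be $n_1'g$, and its $g_r$ taken to be $g_r$---yields
\[
|F_{\tau_j,v}(n_1\,c\,g\,g_r)| \ =\ |F_{\tau_j,v}(c\,n_1'\,g\,g_r)| \ \leq\ D\,\nu_k(v)\,\|g\i(n_1')\i v_\mu\|^n.
\]
The highest-weight vector $v_\mu$ is annihilated by $\fn$, hence fixed by $N\supset N_1$, so $(n_1')\i v_\mu = v_\mu$, and the right-hand side reduces to $D\,\nu_k(v)\,\phi_\mu(g\i)^n$; every trace of $n_1$ has disappeared.

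To estimate $\phi_\mu(g\i)=\|g\i v_\mu\|$ for $g\in \mathfrak S_L$, note first that the various minimal $\Q$-parabolics of $L$ with split component $L\cap A$ are conjugate by Weyl-group representatives lying in $L_\Q\subset(M_1)_\Q$; absorbing such a representative into $c$, we may arrange that $N_L\subset N$. Writing $g = g_\ell^L\,a_L\,g_r^L$ with $g_\ell^L=m_L n_L$ in a compact subset of $M_L\cdot N_L$, $a_L\in (L\cap A)_\epsilon^+$, and $g_r^L$ in a compact subset of $L$, we compute $g\i v_\mu = (g_r^L)\i\,a_L\i\,n_L\i\,m_L\i v_\mu$. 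Because $M_L$ centralizes $L\cap A$, the vector $m_L\i v_\mu$ lies in the $\mu|_{L\cap A}$-eigenspace of $L\cap A$; applying $n_L\i$ produces additional components whose $L\cap A$-weights have the form $\mu|_{L\cap A}+\sum c_i\beta_i$ with $c_i\geq 0$ and $\beta_i\in \Phi^+(L\cap A,L)$. On $(L\cap A)_\epsilon^+$ each factor $e^{-\beta_i}(a_L)$ is bounded, so after applying $a_L\i$ every such component is bounded in norm by a constant multiple of $e^{-\mu|_{L\cap A}}(a_L)$; combined with the bounded operator norm of $(g_r^L)\i$ on $V_\mu$, this gives $\phi_\mu(g\i)\leq C\,e^{-\mu|_{L\cap A}}(a_L)$.

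The final step is the construction of $\mu$. We need $\mu$ to satisfy the hypotheses (\rangeref{weight1}{weight2})---lifting to an algebraic $\Q$-character of $P$ trivial on $M\cdot N$, and $G$-dominance---and, in addition, $\mu|_{L\cap A}$ must lie in the strict interior of the $L$-dominant chamber of $(L\cap A)^*$. The main technical point is the simultaneous realization of these two cone conditions by a $\Q$-rational weight; it can be achieved by taking a sufficiently large integer multiple of $2\rho$ (which is strictly $G$-dominant) plus a $\Q$-rational correction chosen in the preimage of the $L$-strict-dominant cone under the surjection $\fa^*\to(L\cap A)^*$, with the multiple of $2\rho$ large enough to preserve $G$-dominance. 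For such $\mu$, standard estimates give $e^{-\mu|_{L\cap A}}(a_L)\leq C'\|a_L\|^{-\delta}$ for some $\delta>0$ and all $a_L\in (L\cap A)_\epsilon^+$. Since $\|g\|$ is comparable to $\|a_L\|$ up to constants coming from the compact parts of $\mathfrak S_L$ and $S_r$, and applying Lemma~\ref{lem:phimulem} with $n$ replaced by $\lceil n/\delta\rceil$ absorbs the exponent, we obtain the claimed bound $|F_{\tau_j,v}(n_1\,c\,g\,g_r)|\leq C\,\nu_k(v)\,\|g\|^{-n}$.
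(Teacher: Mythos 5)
Your opening moves are sound and parallel the paper's reduction: conjugating $c$ across $n_1$ and using the $N$-invariance of $v_\mu$ to kill the $n_1$-dependence is a clean way to exploit Lemma~\ref{lem:phimulem}. The paper instead reduces to part b) of Theorem~\ref{thm:decay3} after moving the compact factors $m_L$, $n_L$, $g_r^L$ of $g\in\mathfrak S_L$ across $a_L$, and your Siegel-set decomposition of $g$ plays an analogous role.

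The gap is in the final step. You claim $e^{-\mu|_{L\cap A}}(a_L)\leq C'\|a_L\|^{-\delta}$ for all $a_L\in(L\cap A)_\epsilon^+$, but this is impossible when $L$ has noncompact center. The paper explicitly warns of this case (``$M_{1,\psi}$, and hence also $L$, need not have compact center''). If $Z\in\fl\cap\fa$ is a nonzero central direction, every $L$-root vanishes on $Z$, so both $\exp(tZ)$ and $\exp(-tZ)$ lie in $(L\cap A)_\epsilon^+$ for all $t>0$, and $\|\exp(\pm tZ)\|\to\infty$; yet $e^{-\mu|_{L\cap A}}(\exp(tZ))\cdot e^{-\mu|_{L\cap A}}(\exp(-tZ))=1$, so no choice of $\mu$ makes the claimed bound hold in both directions. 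Equivalently, the cone $(\fl\cap\fa)^+$ you need $\mu$ to be strictly positive on contains a line, and no linear functional is positive on a line. The same obstruction shows that a $\mu$ satisfying both your condition~(ii) and the needed norm estimate cannot be manufactured by the ``$2\rho$ plus correction'' device, regardless of how large the multiple of $2\rho$ is taken.

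The paper avoids this trap precisely because it routes the argument through $M_1\cap A$ rather than $L\cap A$: by (\ref{pone1}), $M_1$ inherits the compact-center property, so $(\fm_1\cap\fa)^+$ is a salient cone, $\rho_{M_1}$ is strictly positive on it, and the Weyl group of $M_1\cap A$ in $M_1$ sweeps out all of $M_1\cap A$. Theorem~\ref{thm:decay3} b) is then stated for \emph{all} $a\in M_1\cap A$, and since $L\cap A\subset M_1\cap A$ this covers the central directions of $L$ automatically. If you want to keep your direct appeal to Lemma~\ref{lem:phimulem}, you would need to perform the Weyl reduction at the level of $M_1\cap A$ (not $L\cap A$) and choose $\mu$ with $\mu|_{\fm_1\cap\fa}$ strictly $M_1$-dominant regular~--- at which point you have essentially reconstructed the proof of Theorem~\ref{thm:decay3}~b), and might as well cite it.
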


The theorem is a fairly direct consequence of part b) of theorem \ref{thm:decay3}. We shall give the proof at the
end of this section, following some general comments and an example. In complete analogy to (\ref{siegelset2}),
there exists a finite subset $C_L \subset L_\Q$ such that, if the Siegel set $\mathfrak S_L\subset L$ is chosen
appropriately,
\begin{equation}
\label{siegelset3}
{\textstyle\bigcup}_{c\in C_L}\ \big((\Gamma \cap L)\,c\,\mathfrak S_L\big)\ = \ L\,;
\end{equation}
The estimate asserted by the theorem therefore implies the integrability of $\,h \mapsto F_{\tau_j,v}(n_1h)\,$
along the cuspidal directions in $L$, uniformly for $n_1 \in N_1$, hence also the integrability of $\,A_\psi
F_{\tau_j,v}\,$ over $(\G \cap L)\backslash L\,$; cf. (\ref{psi2}) and (\ref{psi4}). Between rapid decay and
integrability there is room to spare, so the integrability is preserved when one multiplies $\,A_\psi
F_{\tau_j,v}\,$ by a $(\G\cap L)$-invariant function of moderate growth on $L$, such as an Eisenstein series. All
this remains correct if one replaces $\,F_{\tau_j,v}\,$ with its right translate by some $g_r \in G$, and the
resulting integral is locally uniformly bounded as a function of $\,g_r\,$.

We had remarked already that $M_{1,\psi}$, and hence also $L$, need not have compact center. If not, the
integrability of $\,A_\psi F_{\tau_j,v}(h)\,$ depends not only on the rapid decay of this function as the variable
$h$ approaches a cusp, but also on the decay in the noncompact central directions. The latter is covered by the
theorem, too, of course. But it should be contrasted with the behavior of automorphic forms on a group with
noncompact center, which are generally required to transform according to a character of the
center.\stepcounter{equation}\newline

\noindent {\bf Example \theequation:} The announcement \cite{GR:1994} introduces an integral representation for the
standard $L$-function of a generic cusp form on $F_4$ involving a unipotent integration as follows. Let $G$ be the
(split) Chevalley group $F_4$, and $P_1$ be a maximal parabolic whose factor $M_1$ as in (\ref{pone}) is isomorphic
to the split $Spin(7)$. We label the nodes of the Dynkin diagram as follows:\stepcounter{equation}\vspace{-8pt}
\begin{center}\includegraphics[height=40pt]{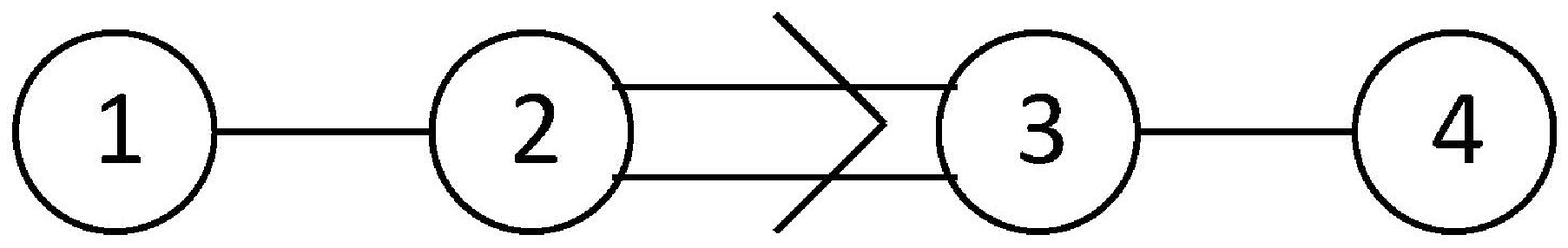}\end{center}
\vspace{-49pt}\begin{flushright}(\theequation)\end{flushright}\vspace{8pt} The reductive group $M_1$ is
characterized by the fact that its root system contains the simple roots $\a_1$, $\a_2$, and $\a_3$.   Its
unipotent radical $N_1$ is therefore the product of the fifteen root spaces corresponding to the roots which
involve the simple root $\a_4$ with a positive coefficient. This coefficient equals 1 for eight of these roots, and
equals 2 for the remaining seven. The root spaces of the latter seven roots span the Lie algebra of the center of
$N_1$, which in this case equals $[N_1,N_1]$. Hence the character variety $\widehat{N_1}\cong N_1/[N_1,N_1]$ is
eight dimensional, and the action of $M_1$ on $\widehat{N_1}$ can be described as the spin representation of
$M_1\cong Spin(7)$.  Let $\psi$, as in (\ref{psi1}), be a nontrivial character which is trivial on each of the
fifteen root spaces in ${\mathfrak n}_1$ except for the root space corresponding to the two roots $\,\alpha_1 +
\alpha_2 + \alpha_3 + \alpha_4\,$, $\,\alpha_2 + 2 \alpha_3 + \alpha_4\,$. Then the stabilizer $M_{1,\psi}$ is the
(split) Chevalley group $G_2$; in particular $M_{1,\psi}$ is reductive. Theorem~\ref{thm:decay4} asserts that
$A_{\psi}$ sends cusp forms to rapidly decaying automorphic functions on $M_{1,\psi}\cong G_2$.

Jacquet-Shalika provide an argument for a different, but related example in \cite{js2}.  In their situation, they
argue, as we did above in remark \ref{rem:decay2}, that it suffices to show rapid decay on elements in $g\in
M_{1,\psi}$ which lie in the maximal torus $M_{1,\psi}\cap A$ of $M_{1,\psi}$, and moreover on such elements in a
fixed Weyl chamber. They decompose the root spaces involved the unipotent integration over the compact fundamental
domain in (\ref{psi2}) according to whether or not conjugation by $g$ in this Weyl chamber contracts or expands the
root space.  The roots which are not expanded can be conjugated to the right into a fixed compact set and then
incorporated into $S_r$, meaning that they are harmless for decay estimates.  In the particular case considered by
Jacquet-Shalika, they observe that the remaining roots lie inside a smaller reductive subgroup, and appeal to
reduction theory to obtain an decay estimate for the integrand in (\ref{psi2}).  Though this argument works in
certain other examples, it relies on this special circumstance:~indeed, it fails in the present example because the
six expanded roots themselves span the full $F_4$ root system, and cannot fit inside a smaller reductive subgroup.
For this reason a result such as theorem \ref{thm:decay3} is needed to show the rapid
decay.\stepcounter{equation}\newline

\noindent {\it Proof\/} of theorem \ref{thm:decay4}.\ \ The factor $c$ in the argument $\,n_1\,c\,g\,g_r\,$ of
$\,F_{\tau_j,v}\,$ can be conjugated across $n_1$, all the way to the left. As in the proof of lemma
\ref{lem:phimulem} we can then omit the factor $c$ altogether, without loss of generality. Let $M_L \subset L$
denote the anisotropic factor of the centralizer of $L\cap A$ in $L$, or in other words the subgroup of $L$ which
plays the same role with respect to $L$ as $M$ does with respect to $G$. We may replace $g\in \mathfrak S_L$ in the
estimate to be proved by the product $m a\in M_L\!\cdot\!(L\cap A)$, and simultaneously replace $\|g\|^{-n}$ by
$\|a\|^{-n}$~-- this reduction of the problem is entirely analogous to deducing theorem \ref{thm:decay1} from
theorem \ref{thm:decay2}, as in remark \ref{rem:decay2}. We mentioned earlier that the group $L$ need not satisfy
the standing hypothesis (\ref{cptctr}); while the statements of theorems \ref{thm:decay1} and \ref{thm:decay1} do
depend on this hypothesis, the argument in remark \ref{rem:decay2} does not. At this point, the estimate we need
has been reduced to
\begin{equation}
\label{psi7}
\! n_1\! \in\! N_1,\, m\!\in\! M_L,\, a \!\in \!L\! \cap \! A,\, g \!\in \! S_r
 \ \Longrightarrow \  |F_{\tau_j,v}(n_1 m a g)|  \leq  C\nu_k(v) \|a\|^{-n}
\end{equation}
for some appropriately chosen $C=C(n,k,S_r)$. Since $M_L\subset M_1$ normalizes $N_1$, the factor $m\in M_L$ can be
moved across the $N_1$-factor. But $F_{\tau_j,v}$ is $\G$-invariant, and the anisotropic group $M_L$ contains
$\G\cap M_L$ as a cocompact subgroup. This allows us to restrict the factor $m\in M_L$ to a compact subset of
$M_L$, then conjugate it back across the $N_1$-factor, commute it across the factor $a\in L\cap A$, and absorb it
into the factor $g_1\in S_r$, provided $S_r$ is suitably enlarged. The resulting seemingly simpler estimate
\begin{equation}
\label{psi7} n_1 \in N_1\,,\ a \in  L\! \cap \! A\,,\ g \in  S_r \
 \ \Longrightarrow \ \ |F_{\tau_j,v}(n_1\, a \,g)| \, \leq \, C\,\nu_k(v)\, \|a\|^{-n}
\end{equation}
follows directly from part b) of theorem \ref{thm:decay3}, because $L\cap A \subset M_1\cap A$.\qed\vspace{12pt}

\section{Proof of theorem \ref{thm:decay1}}\label{sec:proofmainthm}

The assertion of the theorem for $K$-finite cuspidal automorphic forms is completely standard, of course. As was
mentioned in the introduction, theorem \ref{thm:decay1} as stated, for smooth automorphic forms, is well understood
by experts. It follows from results of Casselman and Wallach \cite{Casselman:1989,wallach}, but does not at all
require the full strength of their results. We begin by recalling the relevant details.

Let $(\pi,V)$ be a representation on  a continuous representation on a complete, locally convex topological vector
space $V$. Then
\begin{equation}
\begin{aligned}
\label{Kfinite}
V_{K-\text{finite}}\ =_{\text{def}} \ \text{span of all finite dimensional}
\\
\text{$K$-invariant subspaces is dense in $V\!$}.
\end{aligned}
\end{equation}
For this and other foundational results of Harish Chandra, Atiyah's account \cite[vol. 4, \#91]{Atiyah:collected}
is a convenient reference. For the rest of this section, we suppose more specifically that $(\pi,V)$ is a
continuous representation on a reflexive Banach space over $\C$, of {\em finite length\/}, meaning that every
nested sequence of closed, $G$-invariant subspaces terminates after a finite number of steps. In addition we
suppose that $(\pi,V)$ is {\em admissible\/}, which means
\begin{equation}
\label{admiss1}
\text{for each finite dimensional $K$-module $U$},\,\ \dim \Hom_K(U,V) \,<\, \infty\,.
\end{equation}
Irreducible unitary representations are automatically admissible.

We denote the space of $C^\infty$ vectors in $V$ by $V^\infty$, as in (\ref{Vinfinity}). By infinitesimal
translation, $\fg$ acts on $V^\infty$, which thus becomes a module over $U(\fg_\C)$, the universal enveloping
algebra of $\fg_\C = \C\otimes_\R \fg$. As consequence of the admissibility and finite length of $(\pi,V)$,
\begin{equation}
\label{admiss2}
V_{K-\text{finite}}\ \subset \ V^\infty\,.
\end{equation}
The action map $\,\fg_\C \otimes V^\infty \to V^\infty$,\,\ $X \otimes v \mapsto \pi(X)v$\,,\,\ is $K$-invariant
for purely formal reasons\begin{footnote}{$\pi(k)\left(\pi(X)v\right) = \pi(k)\pi(X)\pi(k^{-1})\pi(k)v=\pi(\Ad
k(X))\left(\pi(k)v\right)$.}\end{footnote}, and consequently $V_{K-\text{finite}}$ is $\,\fg_\C$-invariant. It
follows that $V_{K-\text{finite}}$ has the structure of $(\fg_\C,K)$-module: it is a $\,U(\fg_\C)$-module, equipped
with a locally finite\begin{footnote}{every vector lies in a finite dimensional invariant subspace.}\end{footnote}
$K$-action, which is compatible with the $\,U(\fg_\C)$-module structure in the sense that
\begin{equation}
\begin{aligned}
\label{admiss3}
\text{the infinitesimal $K$-action agrees with $\fk$ acting on $V$ via $\fk \hookrightarrow U(\fg_\C)$,}
\\
\text{and the action map $\, U(\fg_\C)\otimes V_{K-\text{finite}} \to V_{K-\text{finite}}\,$ is $K$-invariant.}
\end{aligned}
\end{equation}
Not only is every $v\in V_{K-\text{finite}}\,$ a $C^\infty$ vector, it is even an analytic\begin{footnote}{for more
elementary reasons, weakly analytic: $g\mapsto \langle \tau, \pi(g)v\rangle$ is a real analytic scalar valued
function, for every bounded linear functional $\tau : V\to \C$\,; that suffices for most applications, and in
particular suffices to establish (\ref{admiss4}).}\end{footnote} vector. This fact directly implies
\begin{equation}
\begin{aligned}
\label{admiss4}
&W \ \mapsto \ W_{K-\text{finite}}\, \ \text{establishes a bijection between closed $G$-invariant}
\\
&\qquad \text{subspaces $W \subset V$ and $(\fg_\C,K)$-submodules $\,M \subset V_{K-\text{finite}}$\,.}
\end{aligned}
\end{equation}
In particular $(\pi,V)$ is irreducible if and only if $\,V_{K-\text{finite}}\,$ is irreducible as
$(\fg_\C,K)$-module.

For $g\in G$, we let $\|g\|$ denote the matrix norm of $g$, as in (\ref{norm-1}), and $\|\pi(g)\|$ the operator
norm of $\pi(g)$ acting on the Banach space $V$. There exist positive constants $\,b,\, c\,$ such that
\begin{equation}
\label{norm}
\|\pi(g)\| \leq b\,\|g\|^c\,\ \text{for all}\,\ \,g\in G\,,
\end{equation}
as follows from the uniform boundedness principle -- see \cite[lemma 2.2]{wallach}.

It will be convenient to choose a basis $\,\{X_1,\dots,X_d\}\,$ of $\,\fg\,$. In the following $J =
(j_1,\dots,j_d)$ shall denote an ordered $d$-tuple of non-negative integers, and $|J| = j_1+ \dots + j_d\,$ the
``length" of $J$. By Poincar\'{e}-Birkhoff-Witt, the monomials
\begin{equation}
\label{infty1}
X^J \ =_{\text{def}}\ X_1^{j_1} X_2^{j_2}\dots X_d^{j_d}\,,
\end{equation}
corresponding to all possible choices of $J$, constitute a basis of the universal enveloping algebra $\,U(\fg)$.
Then
\begin{equation}
\begin{aligned}
\label{norm0}
&\text{the topology on the space of $C^\infty$ vectors $V^\infty$ is the Fr\'{e}chet}
\\
&\ \ \ \ \text{topology defined by the family of norms}\ \ \ \nu_k\, :\, V^\infty \,\rightarrow \, \R_{\geq 0}\ ,
\\
&\ \ \ \ \ \ \ \ \nu_k(v) \, = \, {\max}_{0\leq |J|\leq k}\,\|\pi(X^J)v\| \ \ \ \ \ \ (\,0 \leq k < \infty\,)\,.
\end{aligned}
\end{equation}
To see this, recall that $V^\infty$ is topologized via its inclusion into $C^\infty(G,V)$, with $v\in V^\infty$
corresponding to the $V$-valued function $g\mapsto \pi(g)v$. The topology on $C^\infty(G,V)$, in turn, is the
topology of uniform convergence on compacta of $V$-valued functions and their derivatives of all orders. Via
infinitesimal right translation, $\,U(\fg)$ is isomorphic to the algebra of all left invariant differential
operators on $G$. It follows that the topology on $V^\infty$ is defined by the family of seminorms $\,\sup_{g \in
S} \|\pi(g)\pi(Z)v\|\,$, with $S$ ranging over the compact subsets of $G$ and $Z$ over a basis of $\, U(\fg)$.
According to the uniform boundedness principle, the operator norms of $\pi(g)$, $g\in S$, are bounded on any
compact $S\subset G$. The assertion (\ref{norm0}) follows.

The density statement in (\ref{admiss2}) applies not only to $V$, but to $V^\infty$ as well. Thus $V^\infty$ can be
described also as the completion of $V_{K-\text{finite}}$ with respect to the family of norms $\nu_k$\,.

Recall the hypotheses of the current section: $(\pi,V)$ is a continuous, admissible representation of finite
length, on a reflexive Banach space. If $v\in V^\infty$ is a $C^\infty$ vector and $\tau\in (V')^{-\infty}$ --
i.e., $\tau : V^\infty \to \C$ is a continuous linear functional -- then we can define the scalar valued function
$F_{\tau,v}$ as in (\ref{autodist-1}),
\begin{equation}
\label{infty}
F_{\tau,v}(g) \ = \ \langle \,\tau \,,\, \pi(g)v \,\rangle\,.
\end{equation}
The continuity of $\tau$ means boundedness with respect to one of the seminorms $\nu_k$ -- and then, of course,
with respect to $\nu_\ell$ for any $\ell \geq k$. In this situation, there exist positive constants $b = b(\tau)$
and $c = c(\tau)$, as well as a nonnegative integer $k = k(\tau)$ such that
\begin{equation}
\label{infty4a}
\,|F_{\tau,v}(g)| \leq b\,\|g\|^c\,\nu_k(v)\,,\ \ \text{for all $v \in V^\infty$}.
\end{equation}
In effect, this is Wallach's lemma 5.1 in \cite{wallach}; it follows from (\ref{norm}) and (\ref{norm0}). One
important feature of this bound is that the order of growth does not increase if one differentiates $F_{\tau,v}$ on
the right: for $X \in \fg$, let $r(X)$ denote infinitesimal right translation by $X$; then $r(X)F_{\tau,v} =
F_{\tau,\pi(X)v}\,$, as consequence of the definition (\ref{infty}), and the exponent of growth $c$ remains
unchanged. The left invariant vector fields $r(X)$, $X\in\fg$, generate the algebra of left invariant linear
differential operators on $G$. Thus all derivatives of $F_{\tau,v}$ by such operators have the same order of
growth:
\begin{equation}
\begin{aligned}
\label{infty5a}
&\text{the functions $\,F_{\tau,v},$ with $\tau\in (V')^{-\infty}$ and $\,v \in V^\infty$,}
\\
&\qquad\ \ \ \text{have uniform moderate growth.}
\end{aligned}
\end{equation}
This applies in particular to the modular forms $F_{\tau_j,v}$ that are the subject of the earlier sections.

The usual proof of the rapid decay of $K$-finite cuspidal automorphic forms, as for example in \cite{HC1}, has two
ingredients. First, the fact that the hypothesis of moderate growth in the $K$-finite case implies uniform moderate
growth. Secondly, for a cuspidal automorphic form $F$, any fixed $g\in G$, and any subgroup $N_1 \subset N$ which
arises as the unipotent radical of a $\,\Q$-parabolic subgroup $P_1$ such that $P_1 \supset A$, the maximum of
$N_1\ni n_1\mapsto |F(n_1g)|$ can be bounded by a multiple of the maximum of $|\ell(Y_k)F(n_1g)|$, as $n_1$ ranges
over $N_1$ and $Y_k$ over a basis of the Lie algebra $\fn_1$. One may in fact suppose that $n_1$ lies in a compact
fundamental domain for the action of $\G\cap N_1$ on $N_1$, and that each $Y_k$ lies in one of the root spaces
$\fg^\alpha\subset \fn_1$; cf. (\ref{rootspace}). Then $\Ad n_1^{-1}(Y_k)$ is a bounded linear combination of the
various basis elements $Y_k$ of $\fn_1$. Let $\{X_j\}$ denote a basis of $\fg$. If $g$ lies in a Siegel set
$\mathfrak S \subset G$, and if $Y_k\in \fg^\alpha$, then $\Ad g^{-1}(Y_k)$ is a bounded linear combination of the
various $e^{-\alpha}(a)X_j$, where $a$ denotes the $A_\epsilon^+$-component of $g$ in the decomposition
(\ref{siegelset}). In this way $|F(n_1g)|$, for $n_1\in N_1$ and $g\in \mathfrak S$, can be bounded in terms of the
various $e^{-\alpha}(a)|r(X_j)F(n_1g)|$. Appropriately repeated, this argument can be used to ``push down" the
order of growth of $F$ along $\mathfrak S$ as far as one wants -- this depends on the uniform moderate growth of
$F$, of course. In view of (\ref{infty5a}), the same argument applies to the functions $F_{\tau_j,v}$ when $\tau_j$
is cuspidal, thus proving theorem \ref{thm:decay1}.

In our applications, a slight extension of theorem \ref{thm:decay1} will be useful. We suppose that $G$ can be
expressed as
\begin{equation}
\label{infty6a}
G \ = \ G_1 \!\cdot\! G_2\ \ \ \ \ \ \text{(direct product, defined over $\,\Q\,$)}
\end{equation}
of two reductive groups, both of which satisfy the original hypotheses on $G$. We consider an automorphic
distribution $\tau_j \in \big((V')^{-\infty}\big)^\G$ as in section \ref{sec:rapiddecay}, but require it only to be
cuspidal with respect to the factor $G_1$, in the sense that the cuspidality condition (\ref{cuspidal2}) holds for
all subgroups $U\subset G_1$ which arise as the unipotent radical of a proper $\,\Q$-parabolic subgroup of $G_1$.
We select norms $\,g_i \mapsto \|g_i \|_i\,$ on $\,G_i\,$, $\,i=1,2\,$, as in (\ref{norm-1}).

\begin{thm}
\label{thm:decay5} Let $\mathfrak S_1$ be a generalized Siegel set in $G_1$. If $\tau_j$ is cuspidal with respect
to the factor $G_1$, there exists an integer $N_2=N_2(\tau_j)$, not depending on $\,\mathfrak S_1$, with the
following property. For each $n\in \N$ one can choose a positive integer $k=k(\tau_j,n)$ and a positive constant
$C_1=C_1(\tau_j,\mathfrak S_1,n,k)$ such that\vspace{-2\jot}
\[
g_1 \!\in\! \mathfrak S_1\,,\ g_2 \!\in\! G_2\,,\ v \!\in\! V^\infty  \ \ \Longrightarrow \ \ |F_{\tau_j,v}(g_1 \!\cdot\! g_2)| \, \leq \, C_1\,\nu_k(v)\|g_1\|^{-n}\|g_2\|^{N_2} \,.
\]
\end{thm}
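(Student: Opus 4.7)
The plan is to repeat the proof of theorem \ref{thm:decay1}, which was sketched in the last two paragraphs of the present section, almost verbatim, exploiting the commutativity of the two direct product factors. First, I would choose the minimal $\Q$-parabolic $P$, its split component $A$, and the maximal compact $K$ all compatibly with the direct product decomposition, so that $A = A_1 \cdot A_2$, $P = P_1'\!\cdot\! P_2'$, $K = K_1\!\cdot\! K_2$ with the primed factors inside the corresponding $G_i$. A generalized Siegel set $\mathfrak S_1 \subset G_1$ is then defined in terms of $A_1$ and $P_1'$, and any unipotent radical $N_1 \subset G_1$ of a proper $\Q$-parabolic of $G_1$ lifts to a unipotent subgroup of $G$ whose $\G$-integral of $F_{\tau_j,v}$ vanishes, since $\tau_j$ is cuspidal with respect to $G_1$.

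The push-down mechanism from the proof of theorem \ref{thm:decay1} then transfers directly. For $Y\in\fg^\alpha \subset \fn_1 \subset \fg_1$ with $\alpha\in\Phi^+(\fa_1,\fg_1)$, one has
\[
\Ad\!\bigl((g_1 g_2)^{-1}\bigr)Y \ =\ \Ad(g_2^{-1})\Ad(g_1^{-1})Y \ =\ \Ad(g_1^{-1})Y \ =\ e^{-\alpha}(a_1)\,Z_Y,
\]
because $\Ad(g_2^{-1})$ acts trivially on $\fg_1$ (direct product), and $Z_Y$ is a bounded linear combination of a fixed basis of $\fg_1$; here $a_1$ is the $(A_1)_\epsilon^+$-component of $g_1$. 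Combining the vanishing of the $N_1$-period of $F_{\tau_j,v}$ with a Taylor expansion in the $N_1$-variable and right-translation invariance of $g_2$, one obtains, exactly as in the proof of theorem \ref{thm:decay1}, an estimate of the form
\[
|F_{\tau_j,v}(g_1 g_2)| \ \leq \ C\,e^{-\alpha}(a_1)\sup_{u \in K_0}\, \max_j\,|F_{\tau_j,\pi(X_j)v}(u\,g_1 g_2)|
\]
for a compact $K_0 \subset N_1$. Iterating over the positive roots of $\Phi(\fa_1,\fg_1)$ sufficiently many times yields, for each $n\in\N$, some $Z=Z(n)\in U(\fg_{1,\C})$ and a compact $K_0' \subset G_1$ such that
\[
|F_{\tau_j,v}(g_1 g_2)| \ \leq \ C\,\|g_1\|^{-n-c}\,\sup_{u \in K_0'}\,|F_{\tau_j,\pi(Z)v}(u\,g_1 g_2)|,
\]
where $c = c(\tau_j)$ will be specified in the next step.

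Finally I apply the uniform moderate growth bound (\ref{infty4a}) to the inner function:
\[
|F_{\tau_j,\pi(Z)v}(u\,g_1 g_2)| \ \leq \ b\,\|u\,g_1 g_2\|^c\,\nu_k(\pi(Z)v) \ \leq \ b'\,\|g_1\|^c\,\|g_2\|^c\,\nu_{k'}(v),
\]
where $c$ and $b$ are the constants furnished by (\ref{infty4a}) for $\tau_j$, and $k' = k + \deg Z$. Combining the two displays absorbs the residual $\|g_1\|^c$ into $\|g_1\|^{-n-c}$ and delivers the desired bound with $N_2 = c(\tau_j)$, which depends only on $\tau_j$ (through the representation $(\pi,V)$) and not on $\mathfrak S_1$ or $n$. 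The main subtlety, and the point that makes the argument go through, is precisely that the exponent $c$ in (\ref{infty4a}) is independent of the order of right differentiation: since $r(X)F_{\tau_j,v}=F_{\tau_j,\pi(X)v}$, the same estimate applies to every $F_{\tau_j,\pi(Z)v}$ with the same $c$, only with $\nu_k$ replaced by a higher seminorm. Without this uniformity the iterated push-down would inflate $c$ with each step and no single $N_2$ would serve.
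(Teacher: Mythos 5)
Your proposal is correct and matches the paper's approach, which the authors only sketch in a single sentence (push down exponents in the $G_1$-factor only, where cuspidality is available). Your elaboration of the mechanism — that $\Ad(g_2^{-1})$ acts trivially on $\fg_1$ so the $G_2$-variable rides along inertly, and that the fixed exponent $c$ in (\ref{infty4a}) is what yields a single $N_2$ independent of $n$ and $\mathfrak S_1$ — is exactly the content the paper leaves implicit.
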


Roughly paraphrased, $F_{\tau_j,v}(g_1 \!\cdot\! g_2)$ simultaneously exhibits rapid decay in the variable $g_1 \in
\mathfrak S_1$ and moderate growth in the variable $g_2\in G_2$. Theorems \ref{thm:decay2}, \ref{thm:decay3}, and
\ref{thm:decay4}, all of which were deduced from theorem \ref{thm:decay1}, have analogous extensions, of course.
The proof of theorem \ref{thm:decay5} proceeds along the same line as that of theorem \ref{thm:decay1}, except that
the ``pushing down" of exponents is performed -- and can only be performed -- in the first factor $G_1$.

\section{Uniform moderate growth}
\label{sec:uniform}

In this section, we shall prove theorem \ref{thm:uniformlymg}, using a result of Averbuch \cite{Averbuch:1986}. We
consider a smooth $\Gamma$-automorphic form $F$, of moderate growth, but do not explicitly require $F$ to have
uniform moderate growth. In other words, $\,F \in C^\infty(\G\backslash G)\,$ satisfies the two conditions
\begin{equation}
\label{uniform1}
\begin{alignedat}{2}
&\text{a)}\ \ &&|F(g)| \ \leq \ C\,\|g\|^N\ \ \text{for some $\,C>0\,$ and $\,N \in \N\,$,\,\ and}
\\
&\text{b)}\ \ &&F\,\ \text{transforms finitely under the action of the center of $\, U(\fg_\C)\,$}.
\end{alignedat}
\end{equation}
Averbuch's result asserts that if $F$ is transforms finitely under the Hecke action, the exponent of growth $N$ in
(\ref{uniform1}a) can be chosen solely in terms of the eigenvalues and their multiplicities of the Hecke action on
the finite dimensional space generated by the Hecke translates of $F$, but otherwise independently of $F$. In
particular,

\begin{prop}\label{prop:uniform}
The order of growth of $F$ is uniform if $F$ transforms finitely under the Hecke action.
\end{prop}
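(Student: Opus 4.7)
The plan is to apply Averbuch's theorem not to $F$ itself but to each derivative $r(X)F$, $X \in U(\fg_\C)$, and to observe that the resulting exponent of growth is the same as for $F$. Once that is done, uniform moderate growth is immediate from the definition.

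First I would check that $r(X)F$ inherits all the hypotheses of (\ref{uniform1}) and of Hecke finiteness from $F$. Smoothness and $\G$-invariance are clear. For (\ref{uniform1}b), note that $Z(\fg_\C)$ commutes with $r(X)$ since it is by definition the center of $U(\fg_\C)$; hence any finite-codimensional ideal of $Z(\fg_\C)$ that annihilates $F$ also annihilates $r(X)F$, so $r(X)F$ transforms finitely under $Z(\fg_\C)$ with the same character as $F$. The decisive point is Hecke finiteness: Hecke operators act by averaging over left cosets $\G\alpha\G/\G$, so they commute with right translation by arbitrary $g \in G$, and in particular with $r(X)$. Consequently the span of Hecke translates of $r(X)F$ equals $r(X)$ applied to the (finite-dimensional) span of Hecke translates of $F$, and the characteristic polynomial of any Hecke operator on the former divides the one on the latter. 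The Hecke eigenvalues and multiplicities relevant to Averbuch's bound for $r(X)F$ are therefore controlled by those for $F$.

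Next I would invoke Averbuch's result \cite{Averbuch:1986} for $r(X)F$: it furnishes a constant $C_X > 0$ such that $|r(X)F(g)| \leq C_X\,\|g\|^{N}$ for all $g \in G$, where $N$ is an exponent determined entirely by the Hecke eigenvalues and multiplicities on the span of Hecke translates of $r(X)F$. Since those data are contained in the corresponding data for $F$, the value of $N$ may be taken to be the same exponent that works for $F$ itself, uniformly in the choice of $X$. By the definition of uniform moderate growth given in Section \ref{sec:rapiddecay}, it follows that $F$ has uniform moderate growth.

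The main obstacle, of course, is Averbuch's theorem itself, which supplies the a priori growth exponent depending only on the Hecke data. Modulo that input, the only nontrivial new observation here is the commutation relation $[T,\, r(X)] = 0$ between Hecke operators $T$ and infinitesimal right translation, which ensures that the passage $F \mapsto r(X)F$ never enlarges the spectral data on which Averbuch's exponent depends.
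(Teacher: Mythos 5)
Your argument is correct and is essentially the paper's own: the key point, as you identify, is that the Hecke operators act on the left and hence commute with the left-invariant differential operators $r(X)$, so each $r(X)F$ has Hecke spectral data dominated by that of $F$, and Averbuch's theorem then gives a growth exponent uniform in $X$. The paper makes exactly this observation, just more tersely, in the two sentences immediately following the proposition.
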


Indeed, the Hecke algebra acts on the left, and therefore commutes with the action of any left invariant
differential operator. According to Averbuch, applying such a differential operator does not affect the order of
growth.

Let us suppose now that $F\in C^\infty(\G\backslash G)$ satisfies (\ref{uniform1}), and in addition the cuspidality
condition
\begin{equation}
\label{uniform2}
\begin{aligned}
\int_{(\G\cap U)\backslash U} f(u\,g)\,du \, = \, 0\, \ \ \text{for all $g\in G$ and any $\,U\subset G$ that arises}
\\
\text{as the unipotent radical of a proper $\Q$-parabolic $P\subset G$}.
\end{aligned}
\end{equation}
Because of proposition \ref{prop:uniform}, the first assertion of theorem \ref{thm:uniformlymg} reduces to:

\begin{lem}
\label{lem:uniform} Under the hypotheses just stated, $F$ is Hecke finite.
\end{lem}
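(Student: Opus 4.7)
The plan is to embed $F$ into a finite-length smooth $G$-representation and then invoke Schur's lemma for the Hecke algebra. First, I would smooth $F$ by right convolution: set $V = R(C_c^\infty(G))F \subset C^\infty(\G\backslash G)$. By Dixmier--Malliavin $F \in V$, and convolution preserves cuspidality (by Fubini), moderate growth, and annihilation by the finite-codimension ideal $I \subset Z(\fg_\C)$ killing $F$ (since $Z(\fg_\C)$ commutes with right translation). Passing to the $K$-finite part $V^{K\text{-fin}}$ yields a $(\fg_\C,K)$-module of $K$-finite, cuspidal, $Z(\fg_\C)$-finite, moderate-growth forms. In the $K$-finite setting moderate growth upgrades to uniform moderate growth automatically (as the paper's footnote observes), so the classical Godement--Harish-Chandra rapid-decay theorem applies and embeds $V^{K\text{-fin}}$ into $L^2_{\text{cusp}}(\G\backslash G)$. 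The discrete decomposition of $L^2_{\text{cusp}}$ together with the Harish-Chandra finiteness of cuspidal representations of a given infinitesimal character then forces $V^{K\text{-fin}}$ to have finite length; let $W \subset L^2_{\text{cusp}}$ denote its $L^2$-closure, a finite direct sum of irreducible unitary cuspidal representations.

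Next, I would identify $V$ with a subspace of the smooth globalization $W^\infty$. By Casselman--Wallach, $W^\infty$ is the unique moderate-growth smooth Fr\'echet globalization of the admissible finite-length $(\fg_\C,K)$-module $V^{K\text{-fin}}$. Since $V$ is $G$-invariant, of moderate growth, and has $V^{K\text{-fin}}$ as its $K$-finite part, the inclusion of $V^{K\text{-fin}}$ into both spaces extends---either by the uniqueness portion of Casselman--Wallach or directly by writing each $R(\phi)F$ as an $L^2$-limit of convolutions of $K$-finite approximants with $\phi$---to give $V \subset W^\infty$. In particular $F$ lies in the finite-length smooth $G$-representation $W^\infty$.

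Finally, the Hecke algebra $\mathcal{H}$ generated by the operators $T_\alpha$ for $\alpha$ in the $G_\Q$-commensurator of $\G$ commutes with right $G$-translation, so its image in $\End(W^\infty)$ is contained in $\End_G(W^\infty) = \End_{(\fg_\C,K)}(V^{K\text{-fin}})$. The latter is finite-dimensional because $V^{K\text{-fin}}$ has finite length, hence $\mathcal{H}\cdot F \subset W^\infty$ is finite-dimensional, i.e., $F$ is Hecke finite.

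The delicate step is the inclusion $V \subset W^\infty$: this is where the moderate-growth hypothesis on $F$ is genuinely essential, as Casselman--Wallach uniqueness of smooth globalizations holds precisely within the moderate-growth category, and without it one could not rule out a larger completion of $V^{K\text{-fin}}$ that would obstruct the admissibility transfer. The remaining steps are standard consequences of Harish-Chandra's theory applied in the $K$-finite setting combined with the smooth-to-admissible dictionary.
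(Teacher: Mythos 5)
There is a genuine gap at the crucial step $V\subset W^\infty$, and I believe it is essentially circular. To apply Casselman--Wallach uniqueness you need $V$ (or its completion) to be a smooth Fr\'echet representation \emph{of moderate growth in the topological sense}: the operator seminorms $\nu_k(\pi(g)\,\cdot)$ must be bounded by polynomials in $\|g\|$. Merely knowing that each function in $V$ satisfies a pointwise bound $|f(g)|\leq C_f\|g\|^{N}$ is not that condition. One can try to build the Fr\'echet structure with seminorms $p_k(f)=\sup_{|J|\le k}\sup_g\|g\|^{-N}|r(X^J)f(g)|$; these are indeed finite on each $r(\phi)F$, $\phi\in C_c^\infty(G)$, so the closure $\bar V$ of $V$ in this topology is a bona fide moderate-growth Fr\'echet representation and Casselman--Wallach does give $\bar V\cong W^\infty$. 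But $F$ itself need not lie in $\bar V$: membership in this space is precisely the assertion that $\sup_g\|g\|^{-N}|r(X^J)F(g)|<\infty$ for all $J$, which \emph{is} the uniform moderate growth of $F$~--- exactly the thing theorem~\ref{thm:uniformlymg} (via Averbuch and this lemma) is trying to establish. Your fallback invocation of Dixmier--Malliavin to get ``$F\in V$'' also does not do what you want: Dixmier--Malliavin writes $F=\sum_i r(\phi_i)F_i$ with the $F_i$ merely $C^\infty$ vectors in whatever ambient Fr\'echet space you chose, not elements of $V=r(C_c^\infty(G))F$, and if that ambient space is $C(\G\backslash G)$ with the compact-open topology (where $F$ certainly is a $C^\infty$ vector) then nothing constrains the growth of the $F_i$. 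Likewise the ``$L^2$-limit of $K$-finite approximants'' version is unjustified: you have no topology in hand in which the $K$-finite approximants converge to $F$ and in which convolution by $\phi$ is continuous into $L^2$.

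The paper's proof avoids this circularity in a way worth internalizing. It does not try to establish $F\in W^\infty$ at this stage and does not use Casselman--Wallach at all. Instead it introduces a $\G$-invariant but \emph{not} $G$-invariant measure $dm$ bounded by $\|g\|^{-2N}\,dg$ near the cusps, so that $F$ lands honestly in a Hilbert space $\widetilde W = L^2(dm)\cap(\text{cuspidal})$ on which $G$ acts continuously (not unitarily). Taking $W$ to be the closed cyclic subspace of $F$ in $\widetilde W$, one shows (as you do) that $W_{K\text{-finite}}$ sits in a finite sum of irreducible cuspidal subspaces of $L^2(\G\backslash G,dg)$ on which the Hecke algebra acts by finitely many characters. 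The point is then that $F$ lies in the $L^2(dm)$-closure of $W_{K\text{-finite}}$, and the Hecke action is $L^2(dm)$-continuous, so any $T\in\mathcal H$ satisfies $\prod_k\bigl(T-\chi_k(T)\bigr)F=0$; since the quotient of $\mathcal H$ by the kernel of $T\mapsto(\chi_1(T),\dots,\chi_m(T))$ is finite dimensional, $\mathcal H\cdot F$ is finite dimensional. All the harmonic analysis happens inside a Hilbert space tailored to the known growth rate of $F$, which is exactly the move that sidesteps needing uniform moderate growth (and hence Casselman--Wallach) before Averbuch's theorem is available.
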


\begin{proof}
We choose $N\in \N$ as in (\ref{uniform1}a). Since $\G\backslash G$ has finitely many cusps, there exists a smooth,
strictly positive, $\G$-invariant measure $dm$ on $G$ that is bounded by a multiple of $\,\|g\|^{-2N}dg\,$ near the
``end" of each cusp; here $dg$ refers to Haar measure, as usual. We let $\,\widetilde W^\infty\,$ denote the vector
space of all functions $f\in C^\infty(\G\backslash G)$ which are square integrable with respect to $dm$, and are
cuspidal in the sense of (\ref{uniform2}). Then $\,\widetilde W$, the completion of $\,\widetilde W^\infty$ in the
$L^2(dm)$-norm, is a Hilbert space on which $\,G\,$ acts by right translation, in a continuous manner. It contains
$F$, and
\begin{equation}
\label{app4}
W \ = \ \text{closure in $\,\widetilde W\,$ of}\, \ \{\,r(g)F\,\mid \, g\in G\,\}
\end{equation}
is a $G$-invariant subspace of $\,\widetilde W^\infty$; here $r(g)$ denotes right translation by $g$.

All the $\,f \in W\,$ are locally $L^2$ as functions on $G$, and thus can be regarded as distributions. They
inherit the finiteness condition (\ref{uniform1}b) from $\,F\,$:
\begin{equation}
\label{app5}
\begin{aligned}
&\text{there exists an ideal $\,I_F\,$ of finite codimension in the center}
\\
&\text{of $\, U(\fg_\C)$, such that $\,r(Z)f=0\,$ for every $\,f\in W\,$ and $\,Z\in I_F$\,};
\end{aligned}
\end{equation}
the equality $\,r(Z)f=0\,$ is to be interpreted in the sense of distributions, of course. The ideal $\,I_F\,$
contains monic polynomials in the Casimir operator $\,\Omega\,$, and $\,\Omega\,$ is elliptic transversally to
$\,K\,$. Thus any $\,K$-finite $\,f \in W\,$ satisfies an elliptic differential equation, and therefore
\begin{equation}
\label{app6}
W_{K-\text{finite}} \ \subset \ W \cap C^\infty(\G\backslash G)\,.
\end{equation}
Let $\widehat K$ denote the set of isomorphism classes of irreducible, finite dimensional representations of $K$,
and $W_i$, for $i\in \widehat K$, the space of $i$-isotypic vectors~-- i.e., functions $f\in W$ which transform
according to the class $i$ under $K$. Then
\begin{equation}
\label{app6a}
W_{K-\text{finite}} \ = \ {\oplus}_{i\in\widehat K}\ W_i\,.
\end{equation}
We shall show that the $W_i$ are finite dimensional.

Each $W_i$ is the image of a $K$-invariant projection operator $p_i : W \to W_i\,$, given by right convolution,
under $K$, against the complex conjugate of the character of $i$, suitably renormalized. In particular $p_i$ is
bounded. Like $F$, the right translates $r(g)F$, $g\in G$, satisfy (\ref{uniform1}a). Thus, in view of
(\ref{app4}),
\begin{equation}
\label{app6b}
W_i^N \ =_{\text{def}} \ \{\,f\in W_i \, \mid\, {\sup}_{g\in G}\,\|g\|^{-N}|f(g)|<\infty\,\}\ \ \text{is dense in}\,\ W_i\,.
\end{equation}
Both $p_i$ and right translation by any $g\in G$ preserve the cuspidality condition (\ref{uniform2}). Thus each
$f\in W_i$ is $K$-finite, cuspidal, of moderate growth, and $Z(\fg)$-finite; in other words,
\begin{equation}
\label{app6c}
W_i^N \ \ \text{consists of $K$-finite cuspidal automorphic forms}\,.
\end{equation}
The cuspidal spectrum in $\,L^2(\G\backslash G)\,$ breaks up discretely, with finite multiplicities \cite{GPS}.
Only finitely many irreducible Harish Chandra modules are annihilated by the ideal $\,I_F$, and every irreducible
Harish Chandra module has finite $\,K$-multiplicities. It follows that $W_i^N$ is finite dimensional, hence
\begin{equation}
\label{app6d}
\dim W_i \ <\ \infty \,,
\end{equation}
as consequence of (\ref{app6b}). Moreover, all the $W_i$, $i\in \widehat K$, lie in a finite direct sum of cuspidal
automorphic representations.

Recall the definition (\ref{admiss1}) of an admissible representation. Because of what we have just established,
the $(\fg_\C,K)$-module $W_{K-\text{finite}}$ has finite $K$-multiplicities, so $(\,r,W)$ is indeed admissible. But
$W_{K-\text{finite}}$ is also completely reducible, since it lies in a finite direct sum of $G$-irreducible
subspaces of $L^2(\G\backslash G)$. In view of (\ref{Kfinite}) and (\ref{admiss4}), $(\,r,W)$ must therefore have
finite length and be completely reducible. To summarize,
\begin{equation}
\label{uniform3}
\begin{gathered}
\text{the representation $\,(\,r,W)\,$ is admissible and completely}
\\
\text{reducible, with finitely many irreducible constituents}.
\end{gathered}
\end{equation}
Appealing once more to (\ref{Kfinite}), we now find that $F$ lies in the $L^2(dm)$-closure of
$W_{K-\text{finite}}$, and therefore in the $L^2(dm)$-closure of the linear span of the finitely many irreducible
summands of the cuspidal spectrum which are annihilated by $I_F$. The Hecke action commutes with the action of
$I_F$, and hence the summands can be chosen so that the Hecke algebra acts on each of them by a character. The
assertion of the lemma now follows, because the Hecke action is continuous with respect to the $L^2(dm)$-norm.
\end{proof}

To complete the proof of theorem \ref{thm:uniformlymg}, we must show that any $F$ satisfying the hypotheses stated
at the beginning of this section is a sum of $C^\infty$ vectors in finitely many closed $G$-invariant,
$G$-irreducible subspaces of $L^2(\G\backslash G)$~-- see remark \ref{ftaujv}.

At this point, as consequence of proposition \ref{prop:uniform} and lemma \ref{lem:uniform}, we know that $F$ has
uniform moderate growth. Thus we can apply the usual proof of the rapid decay of cusp forms, as sketched in section
\ref{sec:proofmainthm}, to conclude that $F$ decays rapidly on Siegel sets; not only $F$, in fact, but also all its
derivatives $r(X)F$ with $X\in U(\fg_\C)$, since these satisfy the same hypotheses as $F$. In particular,
\begin{equation}
\label{app6e}
r(X)F \in L^2(\G\backslash G)\,,\ \ \ \text{for all}\,\ X\in U(\fg_\C)\,.
\end{equation}
This means that $g\mapsto r(g)F$ is a $C^\infty$ function on $G$, with values in the Hilbert space
$L^2(\G\backslash G)$.

We can now argue as in the proof of lemma \ref{lem:uniform}, using Haar measure $dg$ instead of $dm$. The space
$W$, which contains $F$, is then a closed subspace of $L^2(\G\backslash G)$. The assertion (\ref{uniform3}), in the
present context, means that $W= \oplus_{1\leq k\leq m}\,W_k$ is a finite direct sum of closed $G$-invariant,
$G$-irreducible subspaces $W_k \subset L^2(\G\backslash G)$. The orthogonal projections $L^2(\G\backslash G)\to
W_k$ are bounded linear maps, and composing the $C^\infty$ function $g\mapsto r(g)F$ with these projections results
in $C^\infty$ $W_k$-valued functions~-- in words, in $C^\infty$ vectors $F_k\in W_k^\infty$. But then $F=
\sum_{1\leq k \leq m} F_k\,$, which is the conclusion we need.

\appendix

\section{Appendix}\label{appendix}

The integral (\ref{cuspidal2}) makes sense because $V^{-\infty}$ is a complete, locally convex, Hausdorff
topological vector space, on which $G$ acts in a strongly continuous manner. These facts are well known to experts,
but it seems no concise reference exists. We shall briefly sketch the relevant arguments in this appendix.

Recall the notion of {\it continuity} of a representation $(\pi,V)$ on a complete, locally convex, Hausdorff
topological vector space: the action map from $G \times V$ to $V$, $(g,v)\mapsto \pi(g)v$, is continuous, relative
to the product topology on the domain. {\it Strong continuity}, on the other hand, only requires the continuity of
the functions $g \mapsto \pi(g)v$ for all $v \in V$. The terminology notwithstanding, ``continuity" thus is a more
restrictive notion than ``strong continuity". The uniform boundedness principle ensures that the two definitions
coincide for Banach spaces. If $V$ is a Banach space, one can argue directly from the definition that the action of
$G$ on $C^\infty(G,V)$, the space of smooth $V$-valued functions, is continuous. Via the assignment $\,v \mapsto
F_v\,$, $\,F_v(g)=\pi(g)v\,$, $V^\infty$ maps into $C^\infty(G,V)$. Thus, if $(\pi,V)$ is a continuous
representation on a Banach space, the representation $(\pi,V^\infty)$ on the space of $C^\infty$ vectors inherits
the continuity from $C^\infty(G,V)$, which contains $V^\infty$ as a closed subspace.

Still in the setting of a Banach representation $(\pi,V)$, the continuity of the induced representation
$(\pi,V^{-\infty})$ on the space of distribution vectors is a more subtle matter. By definition, $V^{-\infty}$ is
the space of continuous linear functionals on $(V')^\infty$, the space of $C^\infty$ vectors for the dual
representation $(\pi',V')$ on the Banach dual $V'$. But the continuity of $\pi$ implies the strong continuity of
the dual representation $\pi'$ only if Banach space $V$ is reflexive \cite[vol. I, Proposition
4.2.2.1]{Warner:1972}. Only in that case is the continuity of $(\pi',(V')^\infty)$ assured, and therefore only in
that case does it make sense to consider the representation $(\pi,V^{-\infty})$ on the space of distribution
vectors $V^{-\infty}$.

Let us suppose then that $V$ is a reflexive Banach space, or more specifically a Hilbert space as in
(\ref{autorep}). We apply (\ref{norm0}) to $V'$ and let $(V')^k$ denote the completion of $V'_{K-\text{finite}}$
with respect to the norm $\nu_k$. In ana\-logy to $V^\infty$,\,\ $(V')^k$ can be topologically embedded into
$C^k(G,V')$ as a closed subspace. That implies the continuity of $(\pi',(V')^k)$, which alternatively can be
deduced directly from (\ref{norm0}). These arguments also topologically identify $(V')^\infty$ with the projective
limit \cite[Chap. 50-7]{treves:1967} of the $(\pi',(V')^k)$,
\begin{equation}
\label{infty4b}
(V')^\infty \ \cong \ \lim_{\leftarrow}\, (V')^k\,.
\end{equation}
Dually, we equip $V^{-\infty}$ with the inductive limit topology
\begin{equation}
\label{infty5b}
V^{-\infty} \ \cong \ \lim_{\rightarrow}\, V^{-k}\,,\ \ \ \ V^{-k} \ =_{\text{def}} \ \left((V')^k\right)'\,,
\end{equation}
for the sequence of Banach duals $V^{-k}$, i.e., the strongest locally convex topo\-logy that makes the inclusions
$V^{-k} \hookrightarrow V^{-\infty}$ continuous. These inclusions are then topological isomorphisms onto their
images, and $V^{-\infty}$ is the union of the $V^{-k}$; see, for example, Chapters 13 and 50-7 in
\cite{treves:1967}. Thus, to prove the strong continuity of $(\pi,V^{-\infty})$, it suffices to establish the
strong continuity of the $(\pi,V^{-k})$. By definition, each $(V')^k$ is isomorphic as Banach space -- though not
as $G$-representation -- to the closure of the image of the map
\begin{equation}
\label{infty6b}
V'_{K-\text{finite}} \ \rightarrow \ V' \oplus V' \oplus \dots \oplus V'\,,\ \ \ \ v' \ \mapsto \ (X^Jv')_{0 \leq |J| \leq k}\ ,
\end{equation}
hence is isomorphic to a closed subspace of a finite direct sum of reflexive Banach spaces, hence is itself a
reflexive Banach space. Thus the strong continuity of the $(\pi',(V')^k)$ implies the strong continuity of the
$(\pi,V^{-k})$, and hence the strong continuity of $(\pi,V^{-\infty})$. With more effort one can prove that
$(\pi,V^{-\infty})$ is even continuous, but strong continuity is enough for our purposes since it gives meaning to
the integral in (\ref{cuspidal2}).

\begin{bibsection}

\begin{biblist}

\bib{Atiyah:collected}{book}{
    author={Atiyah, Michael},
     title={Collected Works},
 publisher={Clarendon Press},
     place={Oxford},
    volume={4}
      date={1988},
}

\bib{Averbuch:1986}{article}{
    author={Averbuch, Vladimir},
     title={Remark on the definition of an automorphic form},
   journal={Compositio Math.},
     pages={3\ndash 13},
    volume={59},
      date={1986},
}

\bib{Borel:1969b}{book}{
    author={Borel, Armand},
     title={Introduction aux groupes arithm\'{e}tiques},
 publisher={Hermann},
     place={Paris},
      date={1969},
    series={Actualit\'{e}s scientifiques et industrielles 1341},
}


\bib{Casselman:1989}{article}{
    author={Casselman, William},
     title={Canonical extensions of Harish-Chandra modules to
            representations of $G$},
   journal={Canad. J. Math.},
    volume={41},
      date={1989},
    number={3},
     pages={385\ndash 438},
}

\bib{fj}{article}{
   author={Friedberg, Solomon},
   author={Jacquet, Herv{\'e}},
   title={Linear periods},
   journal={J. Reine Angew. Math.},
   volume={443},
   date={1993},
   pages={91\ndash 139},
}

\bib{GPS}{article}{
    author={Gelfand, I. M.},
    author={Piatetski Shapiro, I. I.},
     title={Automorphic functions and the theory of representations},
   journal={Trudy Moskovskogo Matemati\v ceskogo Ob\v s\v cestva},
    volume={12},
      year={1963},
     pages={389\ndash 412},
}

\bib{Ginzburg:Crelle1995}{article}{
   author={Ginzburg, David},
   title={On standard $L$-functions for $E_6$ and $E_7$},
   journal={Jour. reine angew. Math.},
   volume={465},
   date={1995},
   pages={101\ndash 131},
}

\bib{GR:1994}{article}{
   author={Ginzburg, David},
   author={Rallis, Stephen},
   title={A tower of Rankin-Selberg integrals},
   journal={International Math. Research Notices},
   volume={5},
   date={1994},
   pages={201\ndash 208},
}

\bib{godement:1962}{article}{
   author={Godement, Roger},
   title={Domaines fondamentaux des groupes arithm\'{e}tiques},
   booktitle={S\'{e}minaire Bourbaki},
   volume={8},
   date={1962},
   pages={201\ndash 225},
}

\bib{godement:1966}{article}{
    author={Godement, R.},
     title={The spectral decomposition of cusp-forms},
  booktitle={Algebraic {G}roups and {D}iscontinuous {S}ubgroups ({P}roc. {S}ympos. {P}ure {M}ath., {B}oulder, {C}olo.,
1965)},
     pages={225--234},
 publisher={Amer. Math. Soc.},
   address={Providence, R.I.},
      date={1966},
}

\bib{HC1}{book}{
    author={Harish Chandra},
     title={Automorphic Forms on Semisimple Lie Groups},
    series={Lecture Notes in Mathematics},
    volume={62}
      date={1968}
 publisher={Springer},
     place={Berlin Heidelberg New York},
}

\bib{Jacobson}{book}{
    author={Jacobson, Nathan},
     title={Lie Algebras},
      date={1962}
 publisher={Interscience},
     place={New York-London-Sydney},
}

\bib{jr}{article}{
   author={Jacquet, Herv{\'e}},
   author={Rallis, Stephen},
   title={Symplectic periods},
   journal={J. Reine Angew. Math.},
   volume={423},
   date={1992},
   pages={175\ndash 197},
}

\bib{js2}{article}{
   author={Jacquet, Herv\'{e}},
   author={Shalika, Joseph},
   title={Exterior square L-functions},
   booktitle={Automorphic forms, Shimura varieties, and $L$-functions, Vol. II},
   series={Perspt. Math.},
   volume={11},
   date={1990},
   publisher={Academic Press},
   pages={143\ndash 226},
   place={Boston},
}



\bib{pairings}{article}{
   author={Miller, Stephen D.},
   author={Schmid, Wilfried},
   title={Pairings of Automorphic Distributions},
   note={Preprint},
   date={2009},
}

\bib{extsq}{article}{
   author={Miller, Stephen D.},
   author={Schmid, Wilfried},
   title={The Archimedean Theory of the Exterior Square $L$-function over $\Q$},
   note={Preprint},
   date={2009},
}

\bib{mirabolic}{article}{
   author={Miller, Stephen D.},
   author={Schmid, Wilfried},
   inbook={Volume in honor of Gregg Zuckerman's 60th birthday,
   series={Contemporary Mathematics}},
   title={Adelization of automorphic distributions and mirabolic Eisenstein series},
   note={to appear}
}


\bib{MW}{book}{
   author={M{\oe}glin, C.},
   author={Waldspurger, J.-L.},
   title={Spectral decomposition and Eisenstein series},
   series={Cambridge Tracts in Mathematics},
   volume={113},
   publisher={Cambridge University Press},
   place={Cambridge},
   date={1995},
}

\bib{treves:1967}{book}{
    author={Treves, Francois},
    title={Topological Vector Spaces, Distributions and Kernels},
    publisher={Academic Press},
    place={New York San Francisco London},
    date={1967},
}

\bib{wallach}{article}{
   author={Wallach, Nolan R.},
   title={Asymptotic expansions of generalized matrix entries of representations of real reductive groups},
    booktitle={Lie Group Representations I},
  series={Lecture Notes in Mathematics},
  volume={1024},
publisher={Springer Verlag},
   place={Berlin Heidelberg New York},
    date={1983},
   pages={287\ndash 369}
}

\bib{Warner:1972}{book}{
    author={Warner, Garth},
    title={Harmonic Analysis on Semi-Simple Lie Groups I, II},
    series={Grundlehren der Mathematischen Wissenschaften},
    volume={188,\ 189},
    publisher={Springer-Verlag},
    place={Berlin Heidelberg New York},
    date={1991},
}

\end{biblist}
\end{bibsection}

\vspace{1cm}
\begin{tabular}{lcl}
Stephen D. Miller                    & & Wilfried Schmid \\
Department of Mathematics         & & Department of Mathematics \\
Hill Center-Busch Campus          & & Harvard University \\
Rutgers, The State University of New Jersey             & & Cambridge, MA 02138 \\
 110 Frelinghuysen Rd             & & {\tt schmid@math.harvard.edu}\\
 Piscataway, NJ 08854-8019\\
 {\tt miller@math.rutgers.edu}  \\

\end{tabular}

\end{document}